\def\pdfsyncstart{}
\def\pdfsyncstop{}
\theoremstyle{plain}
\newtheorem{thm}{Theorem}[section]
\newtheorem{cor}[thm]{Corollary}
\newtheorem{lem}[thm]{Lemma}
\newtheorem{prop}[thm]{Proposition}
\newtheorem{claim}[thm]{Claim}
\theoremstyle{definition}
\newtheorem{defi}[thm]{Definition}
\newtheorem{defis}[thm]{Definitions}
\newtheorem{conj}[thm]{Problem}
\newtheorem{conv}[thm]{Convention}
\newtheorem{nota}[thm]{Notation}
\newtheorem{rem}[thm]{Remark}
\newtheorem{rems}[thm]{Remarks}
\newtheorem{exa}[thm]{Example}
\newtheorem{exas}[thm]{Examples}
\newtheorem{sit}[thm]{}
\newtheorem{setup}[thm]{Setup}
\newcommand{\brem}{\begin{rem}}
\newcommand{\brems}{\begin{rems}}
\newcommand{\erem}{\end{rem}}
\newcommand{\erems}{\end{rems}}
\newcommand{\bexa}{\begin{exa}}
\newcommand{\bexas}{\begin{exas}}
\newcommand{\eexa}{\end{exa}}
\newcommand{\eexas}{\end{exas}}
\newcommand{\bdefi}{\begin{defi}}
\newcommand{\edefi}{\end{defi}}
\newcommand{\bdefis}{\begin{defis}}
\newcommand{\edefis}{\end{defis}}
\newcommand{\bcor}{\begin{cor}}
\newcommand{\ecor}{\end{cor}}
\newcommand{\blem}{\begin{lem}}
\newcommand{\elem}{\end{lem}}
\newcommand{\bconv}{\begin{conv}}
\newcommand{\econv}{\end{conv}}
\newcommand{\bconj}{\begin{conj}}
\newcommand{\econj}{\end{conj}}
\newcommand{\bprop}{\begin{prop}}
\newcommand{\eprop}{\end{prop}}
\newcommand{\bthm}{\begin{thm}}
\newcommand{\ethm}{\end{thm}}
\newcommand{\bnota}{\begin{nota}}
\newcommand{\enota}{\end{nota}}
\newcommand{\bsit}{\begin{sit}}
\newcommand{\esit}{\end{sit}}
\newcommand{\be}{\begin{equation}}
\newcommand{\ee}{\end{equation}}
\newcommand{\bproof}{\begin{proof}}
\newcommand{\eproof}{\end{proof}}
\def\ba{\begin{array}}
\def\ea{\end{array}}
\def\bea{\begin{eqnarray}}
\def\eea{\end{eqnarray}}
\def\bnum{\begin{enumerate}}
\def\enum{\end{enumerate}}
\renewcommand\labelenumi{\rm (\arabic{enumi})}
\newcommand{\Der}{\operatorname{Der}}
\newcommand{\Sing}{\operatorname{Sing}}
\newcommand{\supp}{\operatorname{supp}}
\newcommand{\divis}{\operatorname{div}}
\newcommand{\affcone}{\operatorname{AffCone}}
\newcommand{\Pic}{\operatorname{Pic}}
\newcommand{\Diff}{\operatorname{Diff}}
\newcommand{\Cl}{\operatorname{Cl}}
\newcommand{\rk}{\operatorname{rk}}
\newcommand{\A}{{\mathbb A}}
\newcommand{\PP}{{\mathbb P}}
\newcommand{\C}{{\mathbb C}}
\newcommand{\Q}{{\mathbb Q}}
\newcommand{\Z}{{\mathbb Z}}
\newcommand{\N}{{\mathbb N}}
\newcommand{\F}{{\mathbb F}}
\newcommand{\G}{{\mathbb G}}
\def\bdi{\pdfsyncstop\begin{diagram}}
\def\edi{\end{diagram}\pdfsyncstart}
\title[Affine cones over Fano threefolds]{Affine cones
over Fano threefolds and additive
group actions}
\author{Takashi Kishimoto}
\address{Takashi Kishimoto: Department of Mathematics,
Faculty of Science, Saitama University, Saitama 338-8570, Japan}
\email{tkishimo@rimath.saitama-u.ac.jp}
\author{Yuri Prokhorov}
\address{Yuri Prokhorov: Department
of Algebra, Faculty of Mathematics, Moscow State
University, Moscow 117234, Russia
\qquad  and\qquad
Laboratory of Algebraic Geometry, SU-HSE,
7 Vavilova Str., Moscow 117312, Russia
}
\email{prokhoro@gmail.com}
\author{Mikhail Zaidenberg}
\address{Mikhail Zaidenberg: Universit\'e
Grenoble I, Institut Fourier, UMR 5582 CNRS-UJF, BP 74, 38402 St.\
Martin d'H\`eres c\'edex, France} \email{zaidenbe@ujf-grenoble.fr}
\thanks{
The first author was supported by a Grant-in-Aid for Scientific
Research of JSPS No. 20740004. The second author  was partially supported by
RFBR, grant No. 11-01-00336-a, the grant of
Leading Scientific Schools, No. 4713.2010.1
and
AG Laboratory SU-HSE, RF government
grant, ag. 11.G34.31.0023. This work
was done during a stay of the second and the third authors at the Max
Planck Institut f\"ur Mathematik at Bonn and a stay of the first
and the second authors at the Institut Fourier, Grenoble. The
authors thank these institutions for hospitality.}
\date{}
\begin{document}

\begin{abstract}
We address the following question:

\smallskip

\noindent {\em When an affine cone over a smooth Fano threefold
 admits an effective action of the additive group?}

\smallskip

In this paper we deal with Fano threefolds of index 1 and Picard
number 1. Our approach is based on a geometric criterion from
\cite{KPZ}, which relates the existence of an additive group
action on the cone over a smooth projective variety  $X$ with the
existence of an open polar cylinder $U\simeq Z\times\A^1$ in $X$.
Non-trivial families of Fano threefolds carrying a cylinder were
found in \cite{KPZ}. Here we provide new such examples.

\end{abstract}
\subjclass[2010]{Primary 14R20, 14J45; \ Secondary 14J50, 14R05}
\keywords{Affine cone, Fano variety, automorphism, additive group,
group action}
\maketitle

\bigskip

{\footnotesize \tableofcontents}

\section*{Introduction}
All varieties in this paper are defined over $\C$. It is known
\cite{KPZ} that the affine cone over any smooth del Pezzo surface
of degree $d\ge 4$ anticanonically embedded in $\PP^d$ admits an
effective $\G_a$-action. The existence of a  $\G_a$-action
on the affine cone over a projective variety $X$
depends upon the polarization chosen.
However, if
$\Pic(X)\simeq\Z$, then all polarizations are proportional and so all
the affine cones over $X$ simultaneously admit or do not admit
a  $\G_a$-action.

On the other hand, under the assumption $\Pic(X)\simeq\Z$ it is
natural to restrict to Fano varieties $X$ only, since otherwise
$X$ is not uniruled and so the affine cones over $X$ do not admit
a $\G_a$-action, see \cite{KPZ}. Consider, for instance, a Fano
variety $X$ with Picard number one which contains the affine space
$\A^n$ as a Zariski open subset. Clearly, every affine cone over
$X$ admits a $\G_a$-action. This applies e.g. to $\PP^n$, the
smooth quadric $Q$ in $\PP^{n+1}$, or the Fano threefold $X_5$ of
index 2 and degree 5. In \cite[5.1-5.2]{KPZ} we found two more
families of rational Fano threefolds $X$ with Picard number one
such that every affine cone over $X$ admits a $\G_a$-action.
Namely, these are the smooth intersections of two quadrics in
$\PP^5$ and the Fano threefolds $X_{22}$ of genus $12$. In the
next theorem we provide two more such families. Given a Fano
threefold $X$, we let $\tau(X)$ denote the Fano scheme of $X$ that
is, the component of the Hilbert scheme parameterizing the lines
on $X$.

\bthm\label{mthm} Let $X$ be a Fano threefold of genus $g=9$ or
$10$ with $$\operatorname{Pic}(X)=\mathbb Z\cdot (-K_X)\,.$$ If
the scheme $\tau(X)$ is not smooth, then the affine cone over $X$
under any projective embedding $X\hookrightarrow\PP^N$ admits an
effective $\G_a$-action. The Fano threefolds with a nonsmooth
scheme $\tau(X)$ form a codimension one subvariety in the
corresponding moduli space. \ethm

Let us make the following observation.
It is known  \cite{Pr-90c} that the automorphism group of a
Fano threefold $X$ as in Theorem \ref{mthm} is finite. It follows
that for any affine cone over $X$, the group of its linear
automorphisms is one-dimensional, while the whole automorphism
group is infinite-dimensional, see \cite[\S\S 2-3]{KPZ}.

A geometric construction used in the proof of Theorem
\ref{mthm} involves a line $L$ on $X$, which corresponds to a
non-smooth point of $\tau(X)$. Besides, in Theorems
\ref{theorem-degree4-DuVal} and \ref{theorem-degree3-DuVal} we
provide families of examples, which evoke instead a smooth point
$[L]\in\tau(X)$. It seems plausible that the latter families are
not contained in the former ones. A natural question arises
whether the conclusion of Theorem \ref{mthm} remains true for any
Fano threefold of genus $g=9$ or $10$ with Picard number $1$. We
expect, however, that the answer is negative.

The proof of Theorem \ref{mthm} is based on the following
geometric criterion. Let $X\subseteq\PP^n$ be a smooth projective
variety. We say that $X$ possesses a {\em polar $\A^1$-cylinder}
$U$ if there exists an effective $\Q$-divisor $D$ on $X$ such that
$D\sim_{\Q} H$, where $H$ stands for the hyperplane section, and
$$U=Y\setminus\supp\, D\cong Z\times\A^1\,$$ for some
quasiprojective variety $Z$. We let $\affcone(X)$ denote the
affine cone over $X$.

\bthm\label{crit}{\rm (\cite[Theorem 3.9]{KPZ})} If $X$ as above
possesses a polar $\A^1$-cylinder $U\to Z$ with $\Pic(Z)=0$, then
$\affcone(X)$ admits an effective  $\G_a$-action.

Vice versa, if $\affcone(X)$ admits an effective $\G_a$-action,
then there exists  in $X$ an open set $U=Y\setminus\supp\,D$,
where $D$ is as before, isomorphic to the total space of a line
bundle. \ethm

Specifying Theorem \ref{crit} we deduce the following corollary.

\bcor\label{crit3} Let $X$ be a smooth
subvariety in $\PP^n$ with
$\Pic(X)\simeq\Z$. Then $\affcone(X)$ admits an effective
$\G_a$-action if and only if there exists  in $X$ an open cylinder
$U\simeq Z\times \A^1$.\ecor

\bproof Indeed, since $\Pic(X)\simeq\Z$, every cylinder
in $X$ is polar. Since a line bundle over $Z$ is locally trivial,
shrinking $Z$ if necessary we may assume that it is
trivial.\eproof

We apply this criterion to smooth Fano threefolds of index one and
with Picard number one. Thus Theorem \ref{mthm} follows from
Theorem \ref{mthm1} which says that every Fano threefold $X$
satisfying the assumptions of Theorem \ref{mthm} has a cylinder.

Section 1 contains a brief overview on Fano threefolds, with a
special accent on the rationality problem. Besides, we collect
here some useful facts on the variety of lines in a Fano threefold. In
Section 2 we describe two standard constructions, which give all
Fano threefolds of genus $9$ and $10$. Sometimes the proofs are
hardly accessible in the literature, so we provide them here. The main
Theorems \ref{mthm}\footnote{See also Theorem \ref{mthm1}.},
\ref{theorem-degree4-DuVal}, and \ref{theorem-degree3-DuVal}
are  proven in Section 3.

\section{Generalities on Fano threefolds}\label{sect-gener}
We recall that a Fano variety is a smooth projective variety $X$
with an ample anticanonical class $-K_X$. The Fano index $r=i(X)$
is defined via $-K_X=rH$, where $H\in\Pic(X)$ is a  primitive
ample divisor class. It is well known that $r\le\dim X +1$. We
write $X=X_d$ for a Fano threefold of degree $d$, where  $d=H^3$.
The genus $g$ of $X$ is defined via $2g-2=-K_X^3\,(=dr^3)$.

\subsection{Classification of Fano threefolds: rationality.}
Any Fano threefold $X$ has index $r\le 4$. Furthermore,
\begin{enumerate}
\item[$\bullet$] if $r=4$ then $X\simeq \PP^3$;
\item[$\bullet$] if $r=3$ then $X\simeq Q$,
where $Q$ is a smooth quadric in $\PP^4$. \end{enumerate}

We assume in the sequel that $\Pic(X)\simeq\Z$.

\begin{enumerate}
\item[$\bullet$]
If $r=2$ then the degree of $X$ varies in the range
$d=1,\ldots,5$. More precisely,
\begin{enumerate}\item[(1)] if $d=1$ then $X$
is a hypersurface of degree $6$ in the weighted projective space
$\PP(1,1,1,2,3)$.  Such a threefold $X$ is non-rational
\cite{Tyu}, \cite{Gr};
\item[(2)] if $d=2$ then $X$ is a hypersurface of degree $4$ in
the weighted projective space $\PP(1,1,1,1,2)$. Such a threefold
$X$ is non-rational \cite{Vo};
\item[(3)] if $d=3$ then $X$
is a cubic hypersurface in $\PP^4$, which is known to be
non-rational \cite{CG};
\item[(4)] if $d=4$ then
$X=X_{2\cdot 2}$ is an intersection of two quadrics in $\PP^5$.
Such a threefold is rational  \cite{IPr};
\item[(5)]
if $d=5$ then $X=X_5$ is a linear section (by $\PP^6$) of the
Grassmanian $G(2,5)$ under its Pl\"ucker embedding in $\PP^9$.
Such a threefold is  rational and unique up to isomorphism
\cite{IPr}.
\end{enumerate}

\item[$\bullet$] If
$r=1$ then the genus of $X$ varies in the range
$g=2,\ldots,10$ and $12$. More precisely,
\begin{enumerate}
\item If $g=2,3,5$, or $8$,
then the threefold $X$ is non-rational (see \cite{Is5}, \cite{IPu}
for $g=2$, \cite{IM}, \cite{Is5} for $g=3$, \cite{Be} for $g=5$,
\cite{Is5} and \cite{CG} for $g=8$);
\item if $g=4$ or $6$ then a general threefold $X$ is non-rational \cite{Be},
\cite{IPu}, \cite{Tyu};
\item if $g=7,9,10$, or $12$
then $X$ is rational \cite{IPr}.
\end{enumerate}
\end{enumerate}

We are interested in Fano threefolds which possess a cylinder. By
the Castelnuovo rationality criterion for surfaces, such a
threefold must be rational. Of course, if $X$ contains the affine
space $\A^3$ as an open subset then it has a cylinder. Besides the
projective space $\PP^3$, a smooth quadric $Q$ in $\PP^4$, and the
Fano threefold $X_5$, also certain threefolds $X_{22}$ contain
$\A^3$ \cite{Fur}. The latter threefolds form a subvariety of
codimension two in the moduli space of all the $X_{22}$, which has
dimension 6. In contrast, a cylinder exists in every Fano
threefold $X_{22}$ or $X_{2 \cdot 2}$ \cite[\S 5]{KPZ}. In Theorem
\ref{mthm1} below we describe families of Fano threefolds with a
cylinder among the $X_{16}$ ($g=9$) and the $X_{18}$ ($g=10$).

The question arises whether every rational Fano threefold  carries
a cylinder; in particular, whether this is true for all the
threefolds $X_{12}$ ($g=7$), $X_{16}$ and $X_{18}$.

\smallskip

\subsection{Families of lines on Fano threefolds}
\label{sect-lines}
In the sequel we need the following facts.

\bthm[{\cite{Sh1}}, {\cite{Re1}}, {\cite[Ch.\ 3, \S 2]{Is4}},
{\cite[\S 4.2]{IPr}}]\label{theorem-lines}
Let $X=X_{2g-2}$ be a Fano threefold of genus $g\ge 3$ with
$\Pic(X)=\Z\cdot (-K_X)$,
 anticanonically embedded in $\PP^{g+1}$.
 Then the following hold.
 \begin{enumerate}
\item
There is a line $L$ on $X$.
 \item
For the normal bundle $\mathscr N_{L/X}$ there are the following
possibilities:
\[
\begin{array}{ll}
 (\alpha )&\mathscr N_{L/X}\simeq \mathscr O_{\mathbb P^1}
 \oplus\mathscr O_{\mathbb
 P^1}(-1), \quad\text{or}
\\[10pt]
 (\beta)&\mathscr N_{L/X}\simeq \mathscr O_{\mathbb P^1}(1)
 \oplus\mathscr O_{\mathbb
 P^1}(-2)\,.
\end{array}
\]
 \item
The scheme  $\tau(X)$ is of pure dimension   $1$.
 \item
The scheme   $\tau(X)$ is smooth  and reduced at a point $[L]\in
\tau(X)$ if and only if the corresponding line $L$ is of type
$(\alpha)$.

 \item
For $g\ge 7$ any line $L$ on $X$ meets at most a finite number of
other lines $L_i$ on $X$.
\end{enumerate}
\ethm

\begin{rem}\label{gen-red} Let $g=9$ or $10$ and $\Pic(X)=\Z\cdot (-K_X)$.
According to \cite{Pr-90b} and \cite{GLN} every irreducible
component of the scheme $\tau(X)$ is generically reduced. Thus for
a Fano threefold $X$ as in Theorem \ref{mthm}, the set of
non-smooth points of the scheme $\tau(X)$ is at most finite. On
the other hand, for a general Fano threefold $X$ of this type, the
scheme $\tau(X)$ is an irreducible smooth curve \cite[\S
3.2]{Pr}, \cite[Cor.\ 5.1.b]{Il}.
\end{rem}

\section{Fano threefolds of genera $9$ and $10$}
\label{sect-g=9-10}

We need the following lemma.

\blem\label{lemma-F}
\begin{enumerate}
 \item[(a)]
Any smooth curve $\Gamma$ of degree $7$ and genus $3$  in $\PP^3$
lies on a unique (irreducible) cubic surface $F=F(\Gamma)$ in
$\PP^3$.
 \item[(b)]
For any smooth, linearly non-degenerate curve $\Gamma$ of degree
$7$ and genus $2$  in $\PP^4$, the quadrics containing $\Gamma$
form a linear pencil, say, $\mathcal Q$. The base locus of this
pencil is an irreducible quartic surface $F=F(\Gamma)$ in $\PP^4$.
 \end{enumerate}
\elem

\begin{proof}
We provide a proof in the case $g=10$, the case $g=9$ being
similar. Let $\mathscr I_{\Gamma}$ be the ideal sheaf of
$\Gamma\subseteq \PP^4$. Using the exact sequence
\[
0\longrightarrow \mathscr I_{\Gamma}(2)\longrightarrow
\mathscr O_{\mathbb P^4}(2)
\longrightarrow \mathscr O_{\Gamma}(2)\longrightarrow 0
\]
 by Riemann-Roch we obtain that $\dim H^0(\mathscr I_{\Gamma}(2))\ge
 2$. Hence
there is a pencil of quadrics $\mathcal Q$ through $\Gamma$.

Assume to the contrary that there exist three linearly independent
quadrics $Q_1$, $Q_2$, and $Q_3\subseteq \PP^4$  passing through
$\Gamma$. Then $Q_1\cap Q_2\cap Q_3=\Gamma+L$ (as a scheme), where
$L$ is a line. Consider the exact sequence \be\label{ese}
0\longrightarrow\mathscr O_{\Gamma\cup L}\longrightarrow\mathscr
O_{\Gamma}\oplus\mathscr O_L \longrightarrow \mathscr F
\longrightarrow 0\,, \ee where the quotient sheaf $\mathscr F$ is
supported on $\Gamma\cap L$. Since
$$
\chi(\mathscr O_{\Gamma\cup L})=-4\quad \text{and}\quad
\chi(\mathscr O_\Gamma\oplus\mathscr O_L)=\chi(\mathscr
O_\Gamma)+\chi(\mathscr O_L)=0\,,
$$
we obtain by (\ref{ese})
\[
\# (\Gamma\cap L)=\dim H^0(\mathscr F)
= \chi(\mathscr O_\Gamma\oplus\mathscr O_L)- \chi(\mathscr
O_{\Gamma\cup L}) =4\,.
\]
Thus  $L$ must be a  $4$-secant line of $\Gamma$. Hence the
projection with center $L$ would map $\Gamma$ to a plane cubic, a
contradiction.

 Let us show finally that $F$ is irreducible. Indeed, otherwise
$\Gamma$ would be contained  in an irreducible surface $F'$ of
degree $\le 3$ in $\PP^4$. Since $\Gamma$ is assumed to be
linearly non-degenerate, $F'$ must be a cubic surface. By
\cite[Ch. 4, \S 3]{GH}, either $F'$ is a cone or $F'\simeq\F_1$.
Proceeding as at the beginning of the proof, it is easily seen
that in both cases $ h^0(\mathscr I_{\Gamma}(2))\ge
 h^0(\mathscr I_{F'}(2))\ge 3$. Hence
there is a two-dimensional family of quadrics passing through
$\Gamma$, which leads to a contradiction as before.
\end{proof}

In \ref{classification-F}--\ref{theorem-construction-g=9}
below we deal with the
following setting.

\begin{setup}\label{setup}
We consider the following two cases:
\begin{enumerate}
\renewcommand\labelenumi{(\roman{enumi})}
\renewcommand\theenumi{(\roman{enumi})}
 \item
For $g=9$, we let $W=\mathbb P^3$ and $\Gamma\subseteq \mathbb
P^3$ be a smooth non-hyperelliptic curve of degree $7$ and genus
$3$.
 \item
For $g=10$, we let $W=Q\subseteq \mathbb P^4$ be a smooth quadric
and $\Gamma$ be a smooth curve of degree $7$ and genus $2$ on $Q$.
\end{enumerate}
In both cases, we let $F=F(\Gamma)$ denote the corresponding
surface from Lemma \textup{\ref{lemma-F}}.
\end{setup}

In the next proposition we list the possibilities for such a
surface $F$.

\bprop\label{classification-F}
\renewcommand\labelenumi{\rm (\arabic{enumi})}
\renewcommand\theenumi{\rm (\arabic{enumi})}
In the notation and assumptions as in
\textup{\ref{lemma-F}}--\ref{setup} we let $g=9$ in
case (a) of Lemma \ref{lemma-F} and $g=10$ in case (b).
Then the surface
$F=F(\Gamma)\subseteq \PP^{g-6}$ belongs to one of the following
classes.
\begin{enumerate}
 \item \label{classification-F-normal}
$F\subseteq  \PP^{g-6}$ is a normal del Pezzo surface with at
worst Du Val singularities; or
 \item \label{classification-F-nonnormal}
$F\subseteq  \PP^{g-6}$ is a non-normal scroll, whose singular
locus $\Lambda=\operatorname{Sing}(F)$ is a double line.
Furthermore, the normalization $F'$ of $F$ is a smooth scroll $F'$
of the minimal degree $g-6$ in $\PP^{g-5}$, and the normalization
map $\nu: F'\to F$ is induced by the projection from a point $P\in
\PP^{g-5}\setminus F'$. The restriction $\nu|_{\nu^{-1}(\Lambda)}:
\nu^{-1}(\Lambda)\to \Lambda$ is a ramified double cover. There
are the following possibilities.
\begin{enumerate}
\renewcommand\labelenumii{\rm (\alph{enumii})}
\renewcommand\theenumii{\rm (\alph{enumii})}
 \item
If $g=9$ then $F'\simeq \mathbb F_1$, the embedding $F'\subseteq
\PP^{4}$ is defined by the linear system $|\Sigma+2\ell|$ on
$\mathbb F_1$, where $\Sigma\subseteq\mathbb F_1$ is the
exceptional section and $\ell$ is a ruling, and
$\nu^{-1}(\Lambda)\sim \Sigma+\ell$ is a reduced conic on
$F'\subseteq \PP^{4}$, which is either smooth or degenerate.\\ If
$g=10$ then one of the following hold.
 \item
$F'\simeq \mathbb F_0=\PP^1\times \mathbb P^1$, the embedding
$F'\subseteq \PP^{5}$ is defined by the linear system
$|\Sigma+2\ell|$, and $\nu^{-1}(\Lambda)\sim \Sigma$ is a smooth
conic on $F' \subseteq \PP^{5}$; or
 \item[{\rm (b${}^{\prime}$)}]
$F'\simeq \mathbb F_2$, the embedding $F'\subseteq \PP^{5}$ is
defined by the linear system $|\Sigma+3\ell|$, and
$\nu^{-1}(\Lambda)\sim \Sigma+\ell$ is a reduced degenerate conic
on $F' \subseteq \PP^{5}$.
\end{enumerate}
\end{enumerate}
\eprop

\begin{proof}
Since $F$ is a complete intersection, it is Gorenstein. By the
adjunction formula $\omega_F\simeq \mathscr O_F(-1)$, i.e. $F$ is
(possibly non-normal) del Pezzo surface.

If $F$ is normal, then by \cite{HW}  $F$ is either a surface
described in \ref{classification-F-normal}, or a cone over an
elliptic curve $C\subseteq \PP^{g-7}$ of degree $g-6$. Assume to
the contrary that $F$ is a cone. Let $\xi: \tilde F\to F$ be the
blowup of the vertex. Then $\tilde F$ is a smooth ruled surface
over $C$. Let as before $\Sigma$ and $\ell$ be the exceptional
section and a ruling, respectively, with $\Sigma^2=-k$. Letting
$M=\xi^*\mathscr O_{F}(1)$ and $\tilde \Gamma$ be the proper
transform of $\Gamma$ on $\tilde F$, we can write $M\equiv \Sigma+
k \ell$ and $\tilde \Gamma\equiv a \Sigma +b \ell$. Then
\[
\begin{array}{l}
0=M\cdot \Sigma, \qquad g-6=M^2= k, \qquad  \Sigma^2=-k=6-g\,,
\\[10pt]
7=\tilde \Gamma\cdot M= b, \quad\text{and}\quad \tilde \Gamma\cdot
\Sigma=a(6-g)+7\ge 0\,.
\end{array}
\]
Since $\tilde \Gamma\simeq\Gamma$ is not an elliptic curve, $a\ge
2$. This is only possible for $g=9$,  $a=2$, and so $k=3$. On the
other hand, by adjunction
\[
2g( \tilde \Gamma)-2= (\tilde \Gamma +K_{\tilde F})\cdot \tilde \Gamma=
8,
\]
a contradiction, since $g(\tilde \Gamma)=g(\Gamma)\le 3$.

If $F$ is non-normal then by \cite[Theorem 8]{Na}, \cite{Re3},
\cite[9.2.1]{Dol}, $F$ is a projection of a normal surface $F'$ of
the minimal degree $g-6$ in $\PP^{g-5}$. It is well known (see
e.g., \cite[Ch.\ 4, \S 3, p.\ 525]{GH}) that $F'\subseteq
\PP^{g-5}$ is either a Veronese surface $F'_4\subseteq \PP^5$, or
the image of a Hirzebruch surface $\mathbb F_n$ under the map
given by the linear system $|\Sigma+k\ell|$, where $2k-n=g-6$ and
$k\ge n$. The case of the Veronese surface is impossible because
the degree of every curve on $F'_4\subseteq \PP^5$ is even. Thus
$F'\simeq \mathbb F_n$. Let $\Gamma'\subseteq \mathbb F_n$ be the
proper transform of $\Gamma$ on $F'$. We can write $\Gamma'\sim
a\Sigma+b\ell$, where $a\ge 2$ and $b\ge na$. Note that in the
case $g=9$ we have $a\ge 3$, since $\Gamma$ is assumed being
non-hyperelliptic, see \ref{setup}. It is easily seen that the
remaining possibilities are as in
\ref{classification-F-nonnormal}.
\end{proof}

The following corollary is immediate.

\begin{cor}\label{newcor}
In the notation of Proposition
\textup{\ref{classification-F}}\textup{\ref{classification-F-nonnormal}},
the class of $\Gamma'$ in the Picard group of the normalization
$F'\simeq \mathbb
F_n$ is as follows:
\begin{enumerate}
\renewcommand\labelenumi{\rm (\alph{enumi})}
\renewcommand\theenumi{\rm (\alph{enumi})}
 \item
$g=9$, $F'\simeq \mathbb F_1$, $\Gamma'\sim 3\Sigma+4\ell$;
 \item
$g=10$, $F'\simeq \mathbb F_0$,
$\Gamma'\sim 2\Sigma+3\ell$;
 \item[{\rm (b${}^{\prime}$)}]
$g=10$, $F'\simeq \mathbb F_2$,
$\bar \Gamma'\sim 2\Sigma+5\ell$.
\end{enumerate}
In all cases $\Lambda$ is a $(13-g)$-secant line of $\Gamma$ i.e.,
a $3$-secant if $g=10$ and $4$-secant if $g=9$.
\end{cor}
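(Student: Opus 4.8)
The plan is to pin down the coefficients $a$ and $b$ in the class $\Gamma'\sim a\Sigma+b\ell$, which was already shown to exist in the proof of Proposition \ref{classification-F}\ref{classification-F-nonnormal}, by combining two numerical invariants of $\Gamma'$ on the normalization $F'\simeq\mathbb F_n$: its degree under the embedding $F'\hookrightarrow\PP^{g-5}$ and its arithmetic genus. Recall from that proof that the embedding is given by $M=\Sigma+k\ell$ with $2k-n=g-6$, that $\Sigma^2=-n$, $\Sigma\cdot\ell=1$, $\ell^2=0$, and that one has the constraints $a\ge 2$ and $b\ge na$, together with $a\ge 3$ when $g=9$ (the last because $a=\Gamma'\cdot\ell$ is the degree of the pencil cut on $\Gamma'$ by the ruling, so $a=2$ would exhibit the non-hyperelliptic curve $\Gamma$ as hyperelliptic). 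The three relevant triples $(n,k)$ are $(1,2)$ for case (a), $(0,2)$ for case (b), and $(2,3)$ for case (b$'$).

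First I would record the degree equation. Since $\nu$ is induced by a linear projection from a point off $F'$, one has $\nu^{*}\mathscr O_F(1)=\mathscr O_{F'}(1)=M$, and as $\nu|_{\Gamma'}$ has degree one the projection formula gives $7=\deg\Gamma=M\cdot\Gamma'=a(k-n)+b$. Next comes the genus equation. Because $\Gamma$ is smooth and $\nu$ is an isomorphism away from the double line $\Lambda$, the proper transform $\Gamma'$ is smooth and $\nu|_{\Gamma'}\colon\Gamma'\to\Gamma$ is an isomorphism, so $g(\Gamma')=g(\Gamma)$, which equals $3$ for $g=9$ and $2$ for $g=10$. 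Applying adjunction on $\mathbb F_n$, where $K_{F'}\sim -2\Sigma-(n+2)\ell$, turns $2g(\Gamma)-2=\Gamma'\cdot(\Gamma'+K_{F'})$ into a quadratic relation in $a$ and $b$. Eliminating $b$ through the degree equation, this quadratic reduces to $(a-2)(a-3)=0$ when $g=9$ and to $(a-2)^2=0$ when $g=10$. Hence $a=2$ for $g=10$ (a double root, so there is no ambiguity), while the constraint $a\ge 3$ forces $a=3$ for $g=9$; the degree equation then returns $b=4,\,3,\,5$ respectively, yielding exactly the asserted classes $3\Sigma+4\ell$, $2\Sigma+3\ell$, and $2\Sigma+5\ell$.

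It remains to compute the secant number. Here I would use the classes of the conic $C'=\nu^{-1}(\Lambda)$ furnished by Proposition \ref{classification-F}\ref{classification-F-nonnormal}, namely $C'\sim\Sigma+\ell$ on $\mathbb F_1$ and on $\mathbb F_2$, and $C'\sim\Sigma$ on $\mathbb F_0$. A direct intersection computation on $\mathbb F_n$ then gives $\Gamma'\cdot C'=4$ for $g=9$ and $\Gamma'\cdot C'=3$ in both subcases of $g=10$, i.e.\ $\Gamma'\cdot C'=13-g$. Finally, since $\nu|_{\Gamma'}$ is an isomorphism onto $\Gamma$ and $\nu$ maps $C'$ onto $\Lambda$, the map $\nu$ carries $\Gamma'\cap C'$ bijectively onto $\Gamma\cap\Lambda$, and conversely each point of $\Gamma\cap\Lambda$ lifts into $C'$; so the number of points in which $\Lambda$ meets $\Gamma$ is $\Gamma'\cdot C'=13-g$, whence $\Lambda$ is a $(13-g)$-secant line.

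The two numerical equations are routine, and the determination of $(a,b)$ is forced once they are in hand. The one step that deserves care is the last one: because $\nu|_{C'}\colon C'\to\Lambda$ is a two-to-one cover, the intersection number $\Gamma'\cdot C'$ might a priori fail to match the genuine number of secant points. The point to verify is that $\Gamma$, being smooth, meets $\Lambda$ with a single analytic branch at each intersection point, so exactly one of the two sheets of $F'$ over $\Lambda$ carries $\Gamma'$ there; granting the transversality of $\Gamma'$ and $C'$ (which is generic in the family), the count $\Gamma'\cdot C'$ then equals the secant number precisely, and the identity $13-g$ follows.
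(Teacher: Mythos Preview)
Your proof is correct and follows exactly the route the paper has in mind: the authors declare the corollary ``immediate'' because the computation of $(a,b)$ via the degree and genus equations is precisely what is meant by the phrase ``It is easily seen that the remaining possibilities are as in \ref{classification-F-nonnormal}'' in the proof of Proposition~\ref{classification-F}. Your explicit solution of the two equations, together with the intersection $\Gamma'\cdot C'$ on $\mathbb F_n$, is the intended argument spelled out in full.

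One comment on the final paragraph. The hedge about transversality is unnecessary and slightly weakens the conclusion. In this paper a ``$k$-secant line'' means $\operatorname{length}(\Gamma\cap\Lambda)=k$ (compare the Euler-characteristic count in the proof of Lemma~\ref{lemma-F}(b)), not $k$ distinct geometric points. With that reading, the identity $\operatorname{length}(\Gamma\cap\Lambda)=\Gamma'\cdot C'$ holds on the nose: since $\Gamma$ is smooth, $\nu|_{\Gamma'}\colon\Gamma'\to\Gamma$ is an isomorphism, and locally near a point $q\in\Gamma\cap\Lambda$ the branch of $F'$ carrying $\Gamma'$ embeds analytically as a smooth surface in $\PP^{g-6}$ containing both $\Gamma$ and $\Lambda$; the ambient intersection multiplicity of two smooth curves at a point of a smooth surface equals their intersection number on that surface. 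So no genericity is needed, and the corollary holds for every $\Gamma$ in the setup, not only the general one.
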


Now we can strengthen part (b) of Lemma \textup{\ref{lemma-F}}.

\blem\label{rem} In case (b) of Lemma \textup{\ref{lemma-F}} the
pencil $\mathcal Q$ contains a smooth quadric.\elem

\begin{proof}
Assume to the contrary that every quadric $Q  \in    \mathcal Q$ is
singular. By Bertini Theorem a general member $Q  \in \mathcal
Q$ is smooth outside $F$. Since $F$ is a complete intersection,
every member  $Q  \in    \mathcal Q$ is smooth  at the points of
$F\setminus \operatorname{Sing}(F)$. If $F$ has at worst isolated
singularities, then so does every quadric $Q  \in \mathcal
Q$. Moreover, in this case they all must have a common singularity.
Hence $F$ should be a cone,
which contradicts Proposition \ref{classification-F}.

Thus under our assumption $F$ must have non-isolated
singularities. Moreover, by Proposition \ref{classification-F}(2)
$F$ must be singular along a line $\Lambda$. If some quadric $Q
\in \mathcal Q$ is singular along $\Lambda$, then $F$ is again a
cone, which is impossible. Thus we may assume that every quadric
$Q \in \mathcal Q$ has an isolated singular point $P\in \Lambda$.
Fixing such a quadric $Q$, we can choose an affine chart  in
$\PP^4$ with coordinates $x_1,\dots,x_4$ centered at $P$ so that
$\Lambda$ is given by $x_1=x_2=0$ and $Q$ is given by
$x_1x_3+x_2x_4=0$. There is a quadric $Q'\in  \mathcal Q$ given by
$x_1 u(x_1, x_2,x_4)+x_2v(x_1,x_2, x_3,x_4)=0$, where $u$ and $v$
are linear forms. Since $F$ is singular along $\Lambda$,  at every
point of $\Lambda$ the Jacobian matrix of these two quadratic
forms has rank $\le 1$. Therefore $x_3v(0,0, x_3,x_4)=x_4u(0,
0,x_4)$ for all $x_3$, $x_4$. This implies that $v(0,0,
x_3,x_4)=u(0, 0,x_4)=0$. So $Q'$  is given by $x_1 (ax_1+bx_2)
+x_2(cx_1+dx_2)=0$ for some $a, b, c,d\in\C$. Hence $Q'$ is a cone
with vertex $\Lambda$. Therefore $F=Q\cap Q'$ is a cone with
vertex $P=(0,0,0,0)\in \Lambda$, which again gives a contradiction
and concludes the proof.
\end{proof}

In the case of a curve $\Gamma$
lying on a smooth surface $F$, the following
result can be found in \cite{Is3}.
In the present more general form, the result
was announced without proof
in \cite[Theorems 4.3.3 and 4.3.7]{IPr}.
Besides, we can quote an explanation in \cite[4.3.9(ii)]{IPr}
as to why the assumption in \ref{setup}(i) that
the curve $\Gamma$ is non-hyperelliptic is important.
The details of the proof can be found
in an unpublished thesis \cite{Pr} (in Russian).
For the reader's convenience,
we reproduce them below;
see also the (unpublished) notes \cite{BL}.

\bthm
\label{theorem-construction-g=9}
In the notation as in Setup \textup{\ref{setup}}
there exists a Sarkisov link
\be\label{main-diag}
\xymatrix{
&\tilde D\ar[dl]\ar@{^{(}->}[rr]&&\tilde
X\ar[dr]^{\sigma_0}\ar[dl]_{\sigma}\ar@
{-->}[rr]^{\chi}&
& \hat X\ar[dl]_{\varphi_0}\ar[dr]^{\varphi}&&\hat
F\ar[dr]\ar@{_{(}->}[ll]
\\
\Gamma\ar@{^{(}->}[rr]&&W\ar@{-->}@/_19pt/ [rrrr]_{\psi}&&X_0&&X&&
L\ar@{_{(}->}[ll] } \ee where $\sigma$ is the blowup of $\Gamma$,
$\sigma_0$ and $\varphi_0$ are the anticanonical maps onto
$X_0\subseteq \PP^{g-1}$, $\chi$ is a flop, $X=X_{2g-2}$
 is a smooth
Fano threefold of genus $g$ with $\Pic(X)=\Z\cdot (-K_X)$
anticanonically embedded in $\PP^{g+1}$, and $\varphi$ is the
blowup of a line $L$ on $X$. The exceptional divisor $\hat F$ of
$\varphi$ is a proper transform of the surface
$F=F(\Gamma)\subseteq W$. The exceptional divisor $\tilde D$ of
$\sigma$ is a proper transform of a divisor $D\in
|-(12-g)K_X-(25-2g)L|$. The map $\psi^{-1}$ is the double
projection with center $L$ that is, a map given by the linear
system $|A-2L|$ on $X$, where $A\sim -K_X$ is a hyperplane
section. \ethm

\bproof
Let $\sigma: \tilde X\to W$ be the blowup of $\Gamma$. Let $\tilde
D$ be the exceptional divisor and let $H^*=\sigma^*H$, where $H$
is the positive generator of $\Pic (W)\simeq \Z$.
We have (see e.g. \cite[Lemma 2.2.14]{IPr})
\be\label{numdata}
(H^*)^3=g-8, \qquad
(H^*)^2\cdot \tilde D=0,\qquad
H^*\cdot \tilde D^2=-H\cdot \Gamma=-7\,,
\ee and
\[
\tilde D^3=-\deg \mathscr N_{\Gamma/W}=
\begin{cases}
-23&\text{if $g=10$,}
\\
-32&\text{if $g=9$.}
\end{cases}
\]
Letting $\tilde F\subseteq \tilde X$ be the proper transform of
$F$ we get $\tilde F\sim (12-g)H^*-\tilde D$. The divisor classes
$-K_{\tilde X} \sim (13-g)H^*-\tilde D$ and $\tilde F$ form a
basis of $\operatorname{Pic}(\tilde X)\simeq \mathbb Z\oplus
\mathbb Z$. We have
\be\label{num-data}
-K_{\tilde X}^3=2g-6>0,\quad (-K_{\tilde X})^2\cdot \tilde F=3,
\quad -K_{\tilde X}\cdot \tilde F^2=-2, \quad\text{and}\quad
\tilde F^3=g-13\,.
\ee
We need the following fact.

\begin{claim}\label{claim-nef-big}
The divisor $-K_{\tilde X}$ is nef and big.
\end{claim}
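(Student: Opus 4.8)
The plan is to obtain bigness for free and to concentrate the whole argument on nefness. Indeed, by \eqref{num-data} we have $(-K_{\tilde X})^3=2g-6>0$, and a nef divisor with positive top self-intersection is automatically big; so it suffices to prove that $-K_{\tilde X}\cdot C\ge 0$ for every irreducible curve $C\subseteq \tilde X$.

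First I would reduce the problem to a surface. Since $\tilde F\sim(12-g)H^*-\tilde D$ while $-K_{\tilde X}\sim(13-g)H^*-\tilde D$, one has the decomposition
\[
-K_{\tilde X}\sim \tilde F+H^*\,.
\]
Here $H^*=\sigma^*H$ is nef, being the pullback of the ample generator of $\Pic(W)$, and $\tilde F$ is an irreducible effective divisor because $F$ is irreducible by Lemma \ref{lemma-F}. Hence for every irreducible curve $C\not\subseteq\tilde F$ we get $\tilde F\cdot C\ge 0$ and $H^*\cdot C\ge 0$, so $-K_{\tilde X}\cdot C\ge 0$; in particular this disposes of the fibers $f$ of $\sigma|_{\tilde D}\colon\tilde D\to\Gamma$ (for which one checks directly $-K_{\tilde X}\cdot f=-\tilde D\cdot f=1$). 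Thus everything comes down to showing that the restriction $-K_{\tilde X}|_{\tilde F}$ is nef, i.e. that $-K_{\tilde X}\cdot C\ge 0$ for the curves $C$ contained in $\tilde F$.

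On $\tilde F$ the morphism $\sigma|_{\tilde F}\colon\tilde F\to F$ is an isomorphism over $F\setminus\Sing(F)$, so the geometry of $\tilde F$ and the classes $H^*|_{\tilde F}$, $\tilde D|_{\tilde F}$ are read off from Proposition \ref{classification-F}. Since $(-K_{\tilde X})^2\cdot\tilde F=3>0$ by \eqref{num-data}, the class $-K_{\tilde X}|_{\tilde F}$ has positive self-intersection, and it is enough to test it against the finitely many irreducible curves of negative self-intersection on $\tilde F$. In the normal Du Val case \ref{classification-F}(1) these are the exceptional $(-2)$-curves of the minimal resolution, and a direct intersection computation gives $-K_{\tilde X}\cdot C=0$ on them (consistent with $\sigma_0$ being the anticanonical contraction onto $X_0$). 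In the non-normal case \ref{classification-F}(2) the decisive curve is the strict transform $\tilde\Lambda$ of the double line $\Lambda=\Sing(F)$; by Corollary \ref{newcor} the line $\Lambda$ is a $(13-g)$-secant of $\Gamma$, whence
\[
-K_{\tilde X}\cdot\tilde\Lambda=(13-g)(H^*\cdot\tilde\Lambda)-\tilde D\cdot\tilde\Lambda=(13-g)-(13-g)=0\,.
\]

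The main obstacle is precisely this non-normal case: I expect the real work to lie in showing that $\tilde\Lambda$, together with the exceptional fibers, spans the second extremal ray of $\overline{NE}(\tilde X)$ and that no other curve on $\tilde F$ produces a negative value. Concretely this reduces to the secancy estimate $\#(B\cap\Gamma)\le(13-g)\deg B$ for every irreducible curve $B\subseteq W$, which I would derive from the explicit scroll structure $F'\simeq\mathbb F_n$ and the class of $\Gamma'$ given in Proposition \ref{classification-F} and Corollary \ref{newcor}, with the equality case isolating exactly $\Lambda$. Granting this estimate, the reduction above closes the proof, and the computation $-K_{\tilde X}\cdot\tilde\Lambda=0$ also shows that $-K_{\tilde X}$ is nef but not ample, its trivial curves being precisely those contracted by $\sigma_0$.
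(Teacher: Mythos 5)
Your opening reductions are sound and in fact cleaner than what the paper does: bigness from $(-K_{\tilde X})^3=2g-6>0$ once nefness is known, and the decomposition $-K_{\tilde X}\sim \tilde F+H^*$ immediately confining any $K$-negative curve to $\tilde F$ (the paper reaches the same conclusion $\tilde F\cdot\tilde C<0$ only midway through, after constructing an irreducible anticanonical member through a line of $F$). But the proof stops exactly where the difficulty begins. Everything hinges on the secancy estimate $\#(C\cap\Gamma)\le(13-g)\deg C$ for irreducible curves $C\subseteq F$, and you explicitly write ``granting this estimate'' without supplying it. This is not a routine verification: it is the entire content of the claim, and the paper spends almost all of its proof on (a version of) it. Moreover, your description of the curves to be tested in the normal case is wrong: the dangerous curves on a Du Val del Pezzo surface $\tilde F$ are not ``the exceptional $(-2)$-curves of the minimal resolution'' (those do not even live on $\tilde F$), but the lines, conics, etc.\ on $F$ that could be too secant to $\Gamma$. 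For a line $\Upsilon\subseteq F$ one has $-K_{\tilde X}\cdot\tilde\Upsilon=(13-g)-\#(\Upsilon\cap\Gamma)$, so one must rule out $(14-g)$-secant lines, and similarly for higher-degree curves; nothing about $(-2)$-classes enters. Your suggested route via the scroll structure $F'\simeq\mathbb F_n$ and Corollary \ref{newcor} is only available in the non-normal case, so the normal case is left with no argument at all.

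For comparison, the paper closes this gap by projective-geometric means: using $g(\Gamma)=12-g$ it excludes $(14-g)$-secant lines (projection from such a line would map $\Gamma$ to a plane cubic, resp.\ exhibit a hyperelliptic structure), then shows a hypothetical $K$-negative curve $C$ must be a linearly non-degenerate curve of degree exactly $4$ (the upper bound coming from $\tilde C\cup\tilde\Upsilon\subseteq\tilde M\cap\tilde F$ with $\tilde M$ an irreducible anticanonical member and $-K_{\tilde X}\cdot\tilde F\cdot H^*=5$), and finally derives a contradiction by counting quadrics through $C\cup\Gamma$ against Lemma \ref{lemma-F}. Some argument of this kind — or a genuine proof of your secancy inequality case by case against Proposition \ref{classification-F} — is indispensable; as written, the proposal is an outline with the crux deferred.
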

 \begin{proof}
Since $-K_{\tilde X}^3=2g-6>0$, the divisor $-K_{\tilde X}$ is
big. To establish that it is also nef, we
consider the case $g=10$; the proof in the case $g=9$ is similar.
From the exact sequence
\[
0\longrightarrow \mathscr I_{\Gamma}(3)\longrightarrow
\mathscr O_{W}(3)
\longrightarrow \mathscr O_{\Gamma}(3)\longrightarrow 0\,
\]
we obtain by Riemann-Roch
\[
\dim H^0(\mathscr I_{\Gamma}(3)) \ge
\dim H^0(\mathscr O_{W}(3))-
\dim H^0(\mathscr O_{\Gamma}(3))=10\,.
\] The members of the linear system $|-K_{\tilde X}|$
are proper transforms of the members of the linear system
$|-K_W|=|\mathscr O_W(3)|$ passing through $\Gamma$. Hence
\be\label{9} \dim |-K_{\tilde X}|\ge 9\,.\ee Applying Lemma
\ref{lemma-F} it is easily seen that the only reducible members
$\tilde G\in |-K_{\tilde X}|$ are those of the form $\tilde
G=\tilde F+H^*$. Hence such divisors form a linear subsystem in
$|-K_{\tilde X}|$ of codimension $\ge 5$.

Assume to the contrary that there exists an irreducible
curve $\tilde C$ on $\tilde X$ with
$\tilde C\cdot ({-}K_{\tilde X})<0$, and let
$C=\sigma(\tilde C)\subseteq W$.
Since $g(\Gamma)=2$,
the curve $\Gamma$ does not admit any $4$-secant line.
Indeed, otherwise the projection
from this line would send $\Gamma$ isomorphically to a plane cubic,
which is impossible. Since $$\# (C\cap \Gamma) =
\tilde C\cdot \tilde D> 3H^*\cdot \tilde C =3\deg C\ge 3\,,$$
the curve $C$ cannot be a line.
If $C$ is contained in a plane $\Pi\subseteq \PP^4$
then by the same argument
\[
\# (\Pi \cap \Gamma) \ge \# (C\cap \Gamma) > 3\deg C\ge 6\,.
\]
Since $\deg \Gamma =7$ and $\Gamma$ is linearly non-degenerate,
we get a contradiction.
Thus $C$ is not contained in a plane and so $\deg C\ge 3$.
Assume that $C$ is contained in some hyperplane $\Theta\subseteq\PP^4$.
Then as above
\[
\# (\Theta \cap \Gamma) \ge \# (C\cap \Gamma) > 3\deg C\ge 9\,,
\]
which again leads to a contradiction because $\deg \Gamma=7$.
Therefore $C$ is linearly non-degenerate and $\deg C\ge 4$.

On the other hand, $F$ contains a line, say, $\Upsilon$.
Let $\tilde \Upsilon\subseteq \tilde X$
be its proper transform.
We have $\tilde \Upsilon\cdot ({-}K_{\tilde X})\le 3=
\Upsilon\cdot ({-}K_W)$. Therefore,
fixing four general points on $\tilde \Upsilon$,
a member $\tilde M\in |{-}K_{\tilde X}|$ passing through these points
is forced to contain $\tilde \Upsilon$.
The family of all such members has codimension at most 4, while
degenerate ones vary in a family of codimension at least five,
as we observed before. Hence there exists an
irreducible divisor  $\tilde M\in |{-}K_{\tilde X}|$
containing $\tilde \Upsilon$.
By our assumption  $\tilde M \cdot \tilde C<0$, and then also
$\tilde F\cdot \tilde C=\tilde M \cdot \tilde C-H^*
\cdot \tilde C<0$.
Thus the intersection  $\tilde M\cap \tilde F$
contains  $\tilde C\cup \tilde \Upsilon$ and so by (\ref{numdata})
\[
\deg  (C+\Upsilon)=(\tilde C+\tilde \Upsilon)\cdot H^* \le
\tilde M\cdot\tilde F\cdot H^*=-K_{\tilde X}\cdot\tilde F\cdot H^*
=(3H^*-\tilde D)\cdot (2H^*-\tilde D)\cdot H^*=5\,.
\]
It follows that $\deg C=4$, so $C\subseteq \PP^4$ is a rational
normal quartic curve.
Every quadric in the linear system $H^0(\mathscr I_{C\cup \Gamma}(2))$
contains $C\cup \Gamma $. Picking two distinct points on $\Gamma$
let us consider the family of quadrics from $H^0(\mathscr I_{C}(2))$
passing through these points. It has dimension four.
Such a quadric cuts $\Gamma$ in $13+2=15$ points, hence contains it.

An easy computation gives $\dim H^0(\mathscr I_C(2))=6$.
It follows that
$$\dim H^0(\mathscr I_{C\cup \Gamma}(2))\ge 6-2=4\,.$$
However, the latter
contradicts Lemma \ref{lemma-F}(b). This shows that in the case $g=10$,
the divisor
$-K_{\tilde X}$ is nef. The case $g=9$ can be treated similarly.
\end{proof}

By the Base Point Freeness Theorem we deduce the following.

\begin{cor}\label{BPF}
For some $n>0$ the linear system $|-nK_{\tilde X}|$ defines
a birational morphism $\sigma_0: \tilde X\to X_0\subseteq \PP^N$
whose image is a Fano threefold with at worst
Gorenstein canonical singularities.
Moreover $-K_{\tilde X}=\sigma_0^* (-K_{X_0})$.
\end{cor}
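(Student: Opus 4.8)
The plan is to apply the Base Point Freeness Theorem directly to the Cartier divisor $D=-K_{\tilde X}$. Since $\tilde X$ is smooth, the pair $(\tilde X,0)$ is klt, and $D$ is nef by Claim \ref{claim-nef-big}. Taking $a=1$, the divisor $aD-K_{\tilde X}=-2K_{\tilde X}$ is again nef and big (as $-K_{\tilde X}$ is nef and big), so all hypotheses of the theorem are met. I would conclude that $|-nK_{\tilde X}|$ is base-point free for all $n\gg 0$; equivalently, $-K_{\tilde X}$ is semiample. Fixing such an $n$, the linear system $|-nK_{\tilde X}|$ defines a morphism $\sigma_0\colon \tilde X\to X_0\subseteq\PP^N$ with connected fibers onto a normal projective threefold $X_0$, satisfying $-nK_{\tilde X}=\sigma_0^*\cO_{X_0}(1)$.

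Next I would check that $\sigma_0$ is birational. This follows from bigness: by \eqref{num-data} we have $(-K_{\tilde X})^3=2g-6>0$, so $X_0$ has dimension three and $\sigma_0$ is generically finite; since $-nK_{\tilde X}$ is the pullback of the ample divisor $\cO_{X_0}(1)$ along a morphism with connected fibers, $\sigma_0$ is in fact birational. Moreover, the curves contracted by $\sigma_0$ are precisely those curves $C$ with $-nK_{\tilde X}\cdot C=\cO_{X_0}(1)\cdot\sigma_0(C)=0$, that is, the $K_{\tilde X}$-trivial curves.

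It then remains to identify $X_0$ as a Fano threefold with at worst Gorenstein canonical singularities and to establish the crepancy relation $-K_{\tilde X}=\sigma_0^*(-K_{X_0})$. Writing $K_{\tilde X}=\sigma_0^*K_{X_0}+\sum_i a_iE_i$ with $\sigma_0$-exceptional $E_i$, I would observe that both $K_{\tilde X}$ and $\sigma_0^*K_{X_0}$ are trivial on every curve contained in a fiber of $\sigma_0$ (the former because such curves are $K_{\tilde X}$-trivial, the latter by the projection formula); hence $\sum_i a_iE_i\equiv_{\sigma_0}0$, and the Negativity Lemma forces all $a_i=0$. Thus $\sigma_0$ is crepant and $-K_{\tilde X}=\sigma_0^*(-K_{X_0})$. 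Since $-K_{\tilde X}$ is Cartier and trivial on the fibers of $\sigma_0$, it descends to a Cartier divisor on $X_0$, which by crepancy must be $-K_{X_0}$; as $-nK_{X_0}=\cO_{X_0}(1)$ is ample, $-K_{X_0}$ is ample Cartier, so $X_0$ is Gorenstein Fano. Finally, because $\tilde X$ is smooth and maps crepantly onto $X_0$, the discrepancies of $X_0$ are all $\ge 0$, so $X_0$ has canonical singularities. The only genuinely non-formal input is Claim \ref{claim-nef-big}, which has just been proven; granting it, the Corollary is a routine consequence of base-point-freeness, and the single point requiring care is the crepancy computation guaranteeing that the singularities are no worse than Gorenstein canonical.
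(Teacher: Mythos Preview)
Your proof is correct and takes the same approach as the paper, which merely cites the Base Point Freeness Theorem and leaves all details implicit; you have spelled out the standard unpacking of that citation. The one step worth a word more is the descent of $-K_{\tilde X}$ to a Cartier divisor on $X_0$: numerical triviality on fibers does not by itself guarantee descent, but here the fibers of $\sigma_0$ are rationally chain connected, so a numerically trivial line bundle restricts to the trivial bundle on each fiber, and then $R^1\sigma_{0*}\mathcal O_{\tilde X}=0$ (relative Kawamata--Viehweg, using that $-2K_{\tilde X}$ is $\sigma_0$-nef) gives the descent and hence the Gorenstein property.
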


Our next claim is as follows.

\begin{claim}\label{claim-2.9}
The morphism $\sigma_0$ is small, i.e. it does
not contract any divisor.
\end{claim}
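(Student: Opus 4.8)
The plan is to localize the exceptional locus of $\sigma_0$ on the single surface $\tilde F$ and then show that $\tilde F$ itself is not contracted. By Corollary \ref{BPF} we have $-K_{\tilde X}=\sigma_0^*(-K_{X_0})$ with $-K_{X_0}$ ample, so an irreducible curve $\tilde C\subseteq\tilde X$ is contracted by $\sigma_0$ if and only if $-K_{\tilde X}\cdot\tilde C=0$. It therefore suffices to prove (i) every such curve lies on $\tilde F$, and (ii) $\tilde F$ maps onto a surface in $X_0$; together these force $\Exc(\sigma_0)\subsetneq\tilde F$, hence $\dim\Exc(\sigma_0)\le 1$, which is exactly the assertion that $\sigma_0$ is small.

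For step (i) I would exploit the linear equivalence
$-K_{\tilde X}\sim\tilde F+H^*$, which is immediate from $-K_{\tilde X}\sim(13-g)H^*-\tilde D$ and $\tilde F\sim(12-g)H^*-\tilde D$. Given a contracted curve $\tilde C$, this yields $0=-K_{\tilde X}\cdot\tilde C=\tilde F\cdot\tilde C+H^*\cdot\tilde C$. Since $H^*=\sigma^*H$ is nef, $H^*\cdot\tilde C\ge 0$; equality would force $\sigma(\tilde C)$ to be a point, i.e. $\tilde C$ to be a fibre $\ell$ of the $\PP^1$-bundle $\tilde D\to\Gamma$, but $-K_{\tilde X}\cdot\ell=-\tilde D\cdot\ell=1\ne 0$, so no such fibre is contracted. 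Hence $H^*\cdot\tilde C>0$ and consequently $\tilde F\cdot\tilde C=-H^*\cdot\tilde C<0$. As $\tilde F$ is an irreducible effective divisor (the proper transform of the irreducible $F$ of Lemma \ref{lemma-F}), a curve meeting it negatively must be a component of it, so $\tilde C\subseteq\tilde F$. This proves $\Exc(\sigma_0)\subseteq\tilde F$.

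For step (ii) I would restrict to $\tilde F$. By (\ref{num-data}) one has $(-K_{\tilde X}|_{\tilde F})^2=(-K_{\tilde X})^2\cdot\tilde F=3>0$, while $-K_{\tilde X}|_{\tilde F}$ is nef by Claim \ref{claim-nef-big}. A nef divisor of positive self-intersection on the surface $\tilde F$ is big, so $\sigma_0|_{\tilde F}$ is birational onto its image; equivalently, the projection formula gives $(-K_{X_0})^2\cdot(\sigma_0)_*\tilde F=(-K_{\tilde X})^2\cdot\tilde F=3>0$, showing that $\sigma_0(\tilde F)$ is a surface. Thus $\tilde F\not\subseteq\Exc(\sigma_0)$, and combined with the inclusion of step (i) we get $\Exc(\sigma_0)\subsetneq\tilde F$. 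Hence $\sigma_0$ contracts no divisor.

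I do not expect a genuine obstacle here: the only point requiring care is ruling out the $\sigma$-fibres (the curves on which $H^*$ vanishes) from the contracted locus, which is dispatched by the computation $-K_{\tilde X}\cdot\ell=1$. The entire argument is uniform in $g=9$ and $g=10$, since it uses only the decomposition $-K_{\tilde X}\sim\tilde F+H^*$ and the value $(-K_{\tilde X})^2\cdot\tilde F=3$; the real content is the observation that $-K_{\tilde X}$ exceeds $\tilde F$ precisely by the nef class $H^*$.
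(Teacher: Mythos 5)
Your proof is correct, but it takes a genuinely different route from the paper's. The paper argues purely in the Picard lattice: it writes a hypothetical contracted prime divisor as $\Xi\sim\alpha(-K_{\tilde X})-\beta\tilde F$, extracts $\beta=(2g/3-2)\alpha$ from $\Xi\cdot(-K_{\tilde X})^2=0$, gets $\alpha>0$ from $\Xi\cdot\tilde F\cdot(-K_{\tilde X})\ge 0$ (using $\Xi\ne\tilde F$ and nefness), and then derives a contradiction by re-expressing $\Xi$ in the basis $H^*,\tilde D$ and observing that the coefficient of $H^*$, namely $\alpha(2g^2/3-11g+37)$, is negative for $g=9,10$, which is incompatible with $\sigma_*\Xi$ being effective. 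You instead exploit the decomposition $-K_{\tilde X}\sim\tilde F+H^*$ geometrically: every $K$-trivial curve other than a $\sigma$-fibre must meet $\tilde F$ negatively and hence lie on it, while $\tilde F$ itself survives because $(-K_{\tilde X})^2\cdot\tilde F=3>0$; since $\tilde F$ is irreducible, no divisor can be contracted. Both arguments rest on the same data (the rank-two Picard group and the numbers in (\ref{numdata})--(\ref{num-data})), but yours localizes the exceptional locus rather than parametrizing the putative exceptional divisor, and it is arguably more transparent; it is also close in spirit to the paper's later treatment of the flopping curves, where $(-K_{\tilde X}-\tilde F)\cdot\tilde C>0$ forces $\tilde F\cdot\tilde C<0$. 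One small remark: the phrase ``big, so $\sigma_0|_{\tilde F}$ is birational onto its image'' overstates what you need and what bigness gives on a possibly non-normal surface; the projection-formula computation $(-K_{X_0})^2\cdot(\sigma_0)_*\tilde F=3>0$, which you also supply, is the clean way to see that $\sigma_0(\tilde F)$ is two-dimensional, and that is all the argument requires.
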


\bproof Assume that $\sigma_0$ contracts a prime divisor $\Xi \sim
\alpha({-}K_{\tilde X})- \beta \tilde F$. Then by (\ref{num-data})
\[
0= \Xi\cdot ({-}K_{\tilde X})^2= (2g-6)\alpha - 3\beta\,.
\]
This yields $\beta= (2g/3-2)\alpha$. Since $\Xi\neq\tilde F$ and
 $-K_{\tilde X}$ is nef by \ref{claim-nef-big}, we have
\[
0\le  \Xi\cdot\tilde F\cdot ({-}K_{\tilde X})=3\alpha+2\beta=\alpha (4g/3-1)\,.
\]
Hence $\alpha>0$. Furthermore,
\[
\Xi \sim  \alpha( 2g^2/3 - 11g + 37  ) H^*   + \alpha(2g/3-3)\tilde D\,.
\] Since $\sigma_*\Xi$ is effective we must have $2g^2/3 - 11g +
37\ge 0$, a contradiction. \eproof

The following corollary is standard.

\begin{cor}\label{next-cor}
In the notation as above, $X_0$ has at worst isolated compound
Du Val singularities.
\end{cor}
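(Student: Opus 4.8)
The plan is to assemble the corollary from Corollary \ref{BPF}, Claim \ref{claim-2.9}, and the classification of Gorenstein threefold singularities due to Reid. First I would record what we already have: by Corollary \ref{BPF} the threefold $X_0$ has at worst \emph{Gorenstein canonical} singularities and $-K_{\tX}=\sigma_0^*(-K_{X_0})$, so that $\sigma_0\colon\tX\to X_0$ is \emph{crepant}. Since $\tX$ is smooth, $\sigma_0$ is in fact a resolution of singularities of $X_0$, and by Claim \ref{claim-2.9} it is \emph{small}. By Reid's characterization of Gorenstein canonical threefold singularities as precisely the compound Du Val points, $X_0$ already has (possibly non-isolated) cDV singularities. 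It therefore remains only to show that $\Sing(X_0)$ is finite, which for Gorenstein cDV points is equivalent to $X_0$ being terminal.

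To establish terminality I would run the standard discrepancy bookkeeping. Let $h\colon Y\to\tX$ be any resolution and put $g=\sigma_0\circ h\colon Y\to X_0$. As $\tX$ is smooth it is terminal, so $K_Y=h^*K_{\tX}+\sum_j b_jF_j$ with every $b_j>0$; and since $\sigma_0$ is crepant, $K_{\tX}=\sigma_0^*K_{X_0}$, whence $K_Y=g^*K_{X_0}+\sum_j b_jF_j$. The crucial point is that smallness of $\sigma_0$ leaves no exceptional divisor behind on $X_0$: a prime divisor on a model over $X_0$ is exceptional over $X_0$ if and only if it is exceptional over $\tX$, because a divisor coming from $\tX$ maps onto a divisor of $X_0$ (here $\sigma_0$ contracts no divisor). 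Consequently the $F_j$ exhaust all divisors exceptional over $X_0$, and all their discrepancies $b_j$ are strictly positive. Thus $X_0$ is terminal.

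Finally, a Gorenstein terminal threefold singularity is exactly an isolated cDV point, again by Reid's classification. Combining this with the first paragraph upgrades the cDV conclusion to \emph{isolated} cDV, so $X_0$ has at worst isolated compound Du Val singularities, as asserted.

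The step I expect to require the most care is the discrepancy bookkeeping in the middle paragraph: one must argue cleanly that, precisely because $\sigma_0$ is small, no divisorial valuation over $X_0$ is ``lost'' on passing to the smooth model $\tX$, so that the positivity of discrepancies on $\tX$ transfers verbatim to $X_0$. Everything else is a direct appeal to the structure theory of Gorenstein canonical and terminal threefold singularities, which is why the corollary is labelled standard.
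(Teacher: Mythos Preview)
Your argument in the second and third paragraphs is the standard one and is exactly what the paper has in mind (the paper gives no proof, simply labelling the corollary ``standard''): smallness of $\sigma_0$ together with crepancy and smoothness of $\tX$ forces $X_0$ to be terminal, and Gorenstein terminal in dimension three is precisely isolated cDV by Reid.

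There is, however, an inaccuracy in your first paragraph that you should remove. It is \emph{not} true that Gorenstein canonical threefold singularities are precisely the cDV points. A counterexample is the quotient singularity $\tfrac{1}{3}(1,1,1)$: it is Gorenstein (the $3$-form $dx\wedge dy\wedge dz$ is invariant), canonical (the single exceptional $\PP^2$ has discrepancy $0$), but its embedding dimension is $10$, so it is not a hypersurface singularity and hence not cDV. Reid's result for the canonical case says only that the non-cDV locus of a Gorenstein canonical threefold has codimension $\ge 3$; this does not let you conclude that an individual germ is cDV. Fortunately this claim is not load-bearing: once you prove terminality in your second paragraph, the correct form of Reid's theorem (Gorenstein terminal $\Leftrightarrow$ isolated cDV) gives the corollary directly, and your first paragraph's detour through ``possibly non-isolated cDV'' can simply be deleted.
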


Following the techniques outlined in \cite[\S 4.1]{IPr} we can now
finish the proof of Theorem \ref{theorem-construction-g=9}.

\medskip

\noindent {\em End of the proof of
\ref{theorem-construction-g=9}.} If $-K_{\tilde X}$ is ample then
the map $\sigma_0$ is an isomorphism. In this case we let $\hat
X=\tilde X=X_0$ and $\chi$ to be the identity map. Otherwise by
\cite{Kol3} the contraction $\sigma_0: \tilde X\to X_0$ can be
completed to a flop triangle as in diagram (\ref{main-diag}). Here
$\varphi_0$ is another small resolution of $X_0$. Let $\hat
C\subseteq \hat X$ and $\tilde C\subseteq \tilde X$ be the flopped and the
flopping curves, respectively. Then $\chi$ induces an isomorphism
$\tilde X\setminus \tilde C\simeq \hat X\setminus \hat C$.

In both cases the
divisor $-K_{\hat X}=\varphi^*(-K_{X_0})$ is nef and big.
Further, we have
\[
-K_{\hat X}^3=-K_{\tilde X}^3=2g-6,\quad
(-K_{\hat X})^2\cdot \hat F=(-K_{\tilde X})^2\cdot \tilde F=3, \quad
-K_{\hat X}\cdot \hat F^2=-K_{\tilde X}\cdot \tilde F^2=-2.
\]
Since $\operatorname{\Pic}(\hat X)\simeq
\operatorname{\Pic}(\tilde X)$ is of rank $2$ the Mori cone
$\operatorname{NE}(\hat X)$ is generated by two extremal rays. One
of them has the form $\mathbb R_{+}[T]$, where $T$ is a curve in
the fiber of $\sigma$ (resp., $\varphi_0$) if $\chi$ is an
isomorphism (resp., not an isomorphism). Let $R\subseteq
\operatorname{NE}(\hat X)$ be the second extremal ray. Since
$-K_{\hat X}$ is nef and big, $R$ is $K$-negative. By \cite{Mo}
there exists a contraction $\varphi : \hat X\to X$ of $R$.

Since $-K_{\tilde X}-\tilde F=\sigma^*\,\mathscr{O}(1)$ is nef we
have $(-K_{\tilde X}-\tilde F) \cdot \tilde C>0$. Therefore
$\tilde F\cdot \tilde C<0$ and $\hat F\cdot \hat C>0$. Since
$-K_{\hat X}\cdot \hat F^2=-2<0$, the divisor $\hat F$ is not nef.
Hence $\hat F\cdot R<0$ that is, the ray $R$ is not nef. By the
classification of extremal rays \cite{Mo}, $\varphi$ is a
birational divisorial contraction. Moreover, the
$\varphi$-exceptional divisor coincides with $\hat F$. If
$\varphi\colon \hat X\to X$ contracts $\hat F$ to a point, then by
\cite{Mo}
\[
({-}K_{\hat X})^2\cdot \hat F= 4,\quad 2\quad\text{or}\quad 1\,.
\]
On the other hand, $({-}K_{\hat X})^2\cdot \hat F=3$, a contradiction.
Hence $\varphi\colon \hat X\to X$ contracts $\hat F$
to a curve $Z$. In this case both $X$ and $Z$ are smooth and $\varphi$
is the blowup of $Z$
\cite{Mo}.
Moreover,  $X$ is a Fano threefold of Fano index $r=1$, $2$, $3$ or $4$.
The group $\Pic \hat X$ is generated by
$\hat F$ and
\[
-\frac1r\varphi^*K_X=\frac1r({-}K_{\hat X}+\hat F)\,.
\]
Therefore, the subgroup generated by $\tilde F$ and ${-}K_{\tilde
X}$ has index $r$ in $\Pic \tilde X\simeq \Pic \hat X$. This
implies that $r=1$. We have
\begin{multline*}
({-}K_X)^3=({-}K_{\hat X})\cdot({-}K_{\hat X}+\hat F)^2=
\\
=({-}K_{\hat X})^3+2\hat F\cdot ({-}K_{\hat X})^2+({-}K_{\hat X})
\cdot F^{\prime 2}=
\\
=2g-6+6-2=2g-2\,,
\end{multline*}
i.e. $X$ is a Fano threefold of genus  $g$. Furthermore,
\[
\deg Z={-}K_X\cdot Z=({-}K_{\hat X}+\hat F)\cdot \hat F\cdot
({-}K_{\hat X})=3-2=1\,,
\]
i.e. $Z\subseteq X$ is a line. Now an easy computation shows that
$\hat F^3\neq \tilde F^3$, so $\chi$ is not an isomorphism.

By \cite[Prop.\ 3]{Is4} the linear system $|-K_{\hat X}|$ defines
a birational map and $X_0$ is a Fano threefold with at worst
isolated Gorenstein terminal singularities. In particular,
$|-K_{X_0}|$ is very ample. Hence the linear system $|-K_{\tilde
X}|=\sigma_0^* |-K_{X_0}|$ is base point free   and defines the
map $\sigma_0$.

Finally, $\Gamma$ is (as a scheme) the base locus of the linear
subsystem $\sigma_*|-K_{\tilde X}|\subseteq |\mathscr O_W(13-g)|$.
It remains to show that in the case $g=9$ the curve $\Gamma$ is
not hyperelliptic. Assume the converse. It was shown already that
$\Gamma$ does not admit a $5$-secant line. On the other hand, by
\cite[Ch.\ 2, \S 5]{GH} $\Gamma$ admits a $4$-secant line, say,
$N$. The projection from $N$ defines a linear system of degree $3$
and dimension $\ge 1$ on $\Gamma$. Hence the curve $\Gamma$ is
hyperelliptic and trigonal. However, this is impossible, since
otherwise the linear systems $g_2^1$ and $g_3^1$ on $\Gamma$ define
a birational morphism $\Gamma \to \PP^1\times \PP^1$
whose image is a divisor of bidegree $(2,3)$.
This contradicts the assumption that $g(\Gamma)=3$.
Now the proof of Theorem \ref{theorem-construction-g=9}
is completed.
\end{proof}

\bcor\label{isom} In the  notation as above we have $X\setminus
D\simeq W\setminus F$. \ecor

In the next proposition we describe the flopped and  the flopping
curves in (\ref{main-diag}).

\bprop\label{proposition-4-secants}
In the  notation as above we let $\tilde C\subseteq \tilde X$
and $\hat C\subseteq \hat X$ be the flopping and the
flopped curve, respectively.
Then the following hold.
\begin{enumerate}
\item
Any irreducible component $\hat C_i\subseteq \hat X$ either is
a proper transform of a line $L_i\neq L$ on $X$ meeting $L$,
or (in the case where $L$ is of type $(\beta)$) is
the negative section $\Sigma$ of the ruled surface $\hat F\simeq
\mathbb F_3$.
\item
The curve $\hat C$ is a disjoint union of the $\hat C_i$'s.
\item
For any $\hat C_i$ we have
$$
\mathscr N_{\hat C_i/\hat X}\simeq \mathscr
O_{\PP^1}(-1) \oplus\mathscr O_{\PP^1}(-1)\quad\text{
or}\quad \mathscr N_{\hat C_i/\hat X}\simeq \mathscr O_{\PP^1}\oplus\mathscr
O_{\PP^1}(-2)\,.
$$
It follows that $\chi$ coincides with the Reid's pagoda
\textup{\cite{Re2}} near each $\hat C_i$.
\item
The curve $\tilde C$ in $\tilde X$ is a disjoint union of the
$\tilde C_i$'s, where each $\tilde C_i$ is the proper transform of
a $(13-g)$-secant line of $\Gamma$.
\end{enumerate}
\eprop

\begin{proof}
Recall that $\hat C$ and $\tilde C$ are exceptional loci of
$\varphi_0$ and $\sigma_0$, respectively \cite{Kol3}. The
assertion (1) is proven in \cite[Proposition 3, (iv)]{Is3} and
\cite[Proposition 4.3.1]{IPr}, while (2) and (3) in
\cite[Proposition 4]{Cu} and \cite[Corollary 12, Theorem 13]{Cu},
respectively. By virtue of (3) $\tilde C$ is a disjoint union of
its irreducible components. Finally $(-K_{\hat X}-\hat F)\cdot
\hat C_i=-1$. Therefore $1=(-K_{\tilde X}-\tilde F)\cdot \tilde
C_i=\sigma^*\mathscr O_{W}(1)\cdot\tilde C_i$. So $\sigma(\tilde
C_i)$ is a line. Since $-K_{\tilde X}\cdot \tilde C_i=0$, this
line must be $(13-g)$-secant.
\end{proof}

Finally in the next theorem we provide a criterion as to when the
surface $F$ in Theorem \ref{theorem-construction-g=9} is normal.

\bthm\label{normal} In the notation of Theorems \ref{theorem-lines} and
\textup{\ref{theorem-construction-g=9}}, the surface $F$ is normal
 if and only if $L$ is a line of type $(\alpha)$ on $X$. \ethm

\bproof
We use the notation of Proposition \ref{proposition-4-secants}.
Assume that  $L$ is of type $(\beta)$, and let $\tilde C_0$ denote
the proper transform on $\tilde X$ of the negative section $\Sigma$
of the ruled
surface $ \hat F\simeq \mathbb F_3$. By Remark 5.13 in \cite{Re2},
$\tilde F$ is not normal
along $\tilde C_0$. Since $\tilde C_0$ is
a smooth rational curve, $\sigma$ is an isomorphism at a general
point of $\tilde C_0$. So $F$ is also non-normal along
$\sigma(\tilde C_0)$.

Assume to the contrary that  $L$ is of type $(\alpha)$, while $F$
is non-normal. Then $F$ is singular along a line $\Lambda$.
Clearly $\Lambda\neq \Gamma$, so $\tilde F$ is also non-normal and
singular along $\sigma^{-1}(\Lambda)$. The map $\chi$ is an
isomorphism near a general ruling $\hat f\subseteq \hat F\simeq
\mathbb F_1$. Letting $\tilde f= \chi^{-1}(\hat f)$, the surface
$\tilde F$ is smooth along $\tilde f$ and $\sigma_0(\tilde
f)=\varphi_0(\hat f)$ is a line on $\sigma_0(\tilde
F)=\varphi_0(\hat F)\simeq \mathbb F_1$. Let $l\subseteq F$ be a
general line on  a non-normal scroll $F$
 and  $\tilde l$  be its proper transform on $\tilde F$. An easy
computation shows that $\sigma_0(\tilde l)$ is again a line on
$\sigma_0(\tilde F)=\varphi_0(\hat F)\simeq \mathbb F_1$. Thus we
may suppose that $\tilde l=\tilde f$. On the other hand, $\tilde
l\cap \operatorname{Sing}(\tilde F)\neq \emptyset$, a
contradiction. \eproof

\section{Constructions of cylinders}\label{sect-constr}

In this section we prove Theorem \ref{mthm}. Recall that under its
assumptions $X=X_{2g-2}$ is a Fano threefold in $\PP^{g+1}$ of
genus $g=9$ or $10$ with $\operatorname{Pic}(X)=\mathbb Z\cdot
(-K_X)$, having a non-smooth Fano scheme $\tau(X)$. By virtue of
Corollary \ref{crit3} the first assertion of  Theorem \ref{mthm}
is equivalent to the following one.

\bthm\label{mthm1} Under the assumptions of Theorem \ref{mthm} the
variety $X$ contains a cylinder.\ethm

\bproof Assuming that the scheme $\tau(X)$ is not smooth at a
point $[L]\in \tau(X)$, it suffices to construct a cylinder in
$W\setminus F$ (see Corollary \ref{isom}).

By Theorem \ref{theorem-lines}(4) $L$ is a line of type $(\beta)$
on $X$. According to Theorem \ref{normal} the surface $F$ is
non-normal, and so by Proposition \ref{classification-F}
$\Lambda=\Sing(F)$ is a double line on $F$. Consider the following
diagram:
\[
\xymatrix{
&\bar W\ar[dr]^{q}\ar[dl]_{p}&
\\
W\ar@{-->}[rr]^{\xi}&&\PP^{g-8} }
\]
where $\xi$ is the projection from $\Lambda$, $p$ is the blowup of
$\Lambda$, and $q=\xi \circ p$. We show below that $q$ is a
$\PP^{11-g}$-bundle over $\PP^{g-8}$. Let $\bar E\subseteq \bar W$
be the exceptional divisor and $\bar F\subseteq \bar W$ be the
proper transform of $F$.

In the case $g=10$ the fibers of $\xi$ are intersections of our
smooth quadric $W\subseteq \PP^4$ (see \ref{setup}) with planes in
$\PP^4$ containing $\Lambda$. Therefore $q$ is a $\PP^1$-bundle
over $\PP^2$, whose fibers are the proper transforms of lines in
$W\subseteq \PP^4$ meeting $\Lambda$. The morphism $q : \bar W\to
\mathbb P^2$ is given by the linear system $|p^*\mathscr O_W(1)-
\bar E|$. Since $\bar F\sim 2p^*\mathscr O_W(1)- 2\bar E$, the
image $q(\bar F)=\xi (F)$ is a conic on $\mathbb P^2$. Since
$\mathscr N_{\Lambda/W}\simeq \mathscr O_\Lambda \oplus \mathscr
O_\Lambda(1)$, the $\PP^1$-bundle $\bar E\to\Lambda$ is that of
the Hirzebruch surface $\F_1\to\PP^1$. Moreover, its negative
section $\bar\Sigma$ is a fiber of $q$. It follows that the open
set $W\setminus F\simeq \bar W\setminus (\bar F \cup \bar E)$ is
an $\A^1$-bundle over $\PP^2\setminus q(\bar F\cup \bar\Sigma)$.
By \cite[Theorem 2]{KM} and \cite[Theorem]{KW}, this bundle is
trivial over a Zariski open subset $Z\subseteq\PP^2\setminus
q(\bar F\cup \bar\Sigma)$. This gives a cylinder contained in
$W\setminus F$ and also a cylinder on $X$.

In the case $g=9$ the fibers of $\xi$ are planes in $W= \PP^3$.
The intersection of such a plane with the cubic surface $F$
consists of the double line $\Lambda$ and a residual line $l$.
Therefore $q$ is a $\PP^2$-bundle over $\PP^1$, and $\bar F\cup
\bar E$ intersects each fiber along a pair of lines.

More precisely, we have $\bar E \cong \F_0$ and $\bar F \cong
\F_1$ (see Proposition \ref{classification-F}(2a)). Furthermore,
$q|_{\bar E}$ and $q|_{\bar F}$, respectively, yield $\mathbb
P^1$-bundles with rulings being lines in the fibers of $q$. By a
simple computation we obtain that $\bar F|_{\bar E}\sim 2
\bar\Sigma + \bar l$, where $\bar\Sigma$ (resp. $\bar l$) is a
section (a ruling, respectively) of the trivial $\PP^1$-bundle
$\bar E \to \Lambda$. Notice that $\bar \Sigma$ is a line in a
fiber of $q$ and $\bar l$ is a section of $q$. The finite map
$p|_{\bar F}:\bar F \to F$ yields a normalization of $F$. For the
curve $\bar F |_{\bar E}$ there are the following two
possibilities :
\begin{enumerate}

\item[{\rm (i)}]
$\bar F|_{\bar E}=\Delta_1$, where $\Delta_1 \in |2\bar\Sigma
+\bar l|$ is irreducible, or

\item[{\rm (ii)}]
$\bar F|_{\bar E}=\bar\Sigma +\Delta_0$, where $\Delta_0 \in
|\bar\Sigma +\bar l|$ is a diagonal.

\end{enumerate}
We claim that $W\setminus F\simeq \bar W\setminus (\bar F \cup
\bar E)$ contains a cylinder. In what follows we deal with case
(ii) only; (i) can be treated in a similar fashion. There exists
exactly one fiber of $q$, say $\bar\Pi_\infty$, such that $\bar E,
\bar F$ and $\bar \Pi_\infty$ meet along a common line. Blowing up
$\bar W^\circ := \bar W\backslash \bar \Pi_\infty$ along the
irreducible curve $\bar E \cap \bar F \cap \bar W^{\circ}$, we
obtain an $\F_1$-bundle $\hat\pi:\hat W^{\circ} \to \A^1$ together
with the proper transforms $\hat F^{\circ}$ and $\hat E^{\circ}$
on $\hat W^{\circ}$ of $\bar F$ and $\bar E$, respectively. The
exceptional divisor $\hat E_1^{\circ}$ is ruled over $\A^1$ with
rulings being the $(-1)$-curves in the fibers isomorphic to
$\F_1$. There is a natural $\PP^1$-bundle structure $\rho:\hat
W^{\circ} \to\hat E_1^{\circ}$ which defines in each fiber of
$\rho$ the ruling $\F_1\to\PP^1$. The map $\rho$ sends $\hat
E^{\circ}$ and $\hat F^{\circ}$ to the intersections $\hat
E^{\circ} \cap\hat E_1^{\circ}$ and $\hat F^{\circ}\cap\hat
E_1^{\circ}$, respectively. The complement $\hat W^{\circ}
\setminus (\hat E_1^{\circ} \cup\hat E^{\circ} \cup\hat F^{\circ})
\simeq \bar W \backslash (\bar E \cup \bar F \cup \bar \Pi_\infty)
\simeq W\setminus (F\cup \Pi_\infty)$ is again a $\PP^1$-bundle
over $\hat E_1^{\circ}\setminus(\hat E^{\circ} \cup\hat
F^{\circ})$, where $\Pi_\infty:=p_*(\bar \Pi_\infty)$. This bundle
is trivial over a Zariski open subset $Z\subseteq\hat
E_1^{\circ}$, and admits a tautological section defined by $\hat
E_1^{\circ} \hookrightarrow\hat W^{\circ}$. After trivialization
the map $\rho:\rho^{-1}(Z)\to Z$ becomes the first projection
$Z\times\PP^1\to Z$. The second projection of the tautological
section defines a morphism $f:Z\to\PP^1$. The automorphism
$t\longmapsto (t-f(z))^{-1}$ of $Z\times\PP^1$ sends this section
to the constant section `at infinity'. The $\A^1$-bundle
$\rho:\hat W^{\circ}\setminus \hat E_1^{\circ}\to\hat E_1^{\circ}$
being trivial over $Z$ it defines a cylinder
$\rho^{-1}(Z)\setminus \hat E_1^{\circ}\simeq Z\times\A^1$, as
required. \eproof

\begin{proof}[Proof of Theorem \textup{\ref{mthm}}]
The first assertion of Theorem \ref{mthm} is a consequence of
Theorems \ref{theorem-lines}(4), \ref{normal}, and \ref{mthm1}.
Let us show the second one. Recall that the automorphism group of
a Fano threefold of genus $g=9$ or $10$ with
$\operatorname{Pic}(X)= (-K_X)\cdot \mathbb Z$ is finite
\cite{Pr-90c}.

Fix a moduli space $\mathscr M_g$ of the Fano threefolds of genus
$g=9$ or $10$ with $\operatorname{Pic}(X)= (-K_X)\cdot \mathbb Z$.
It can be defined using GIT, and is unique up to a birational
equivalence. Let $\mathscr{ML}_g$ be the moduli space of pairs
$(X,L)$, where $X$ is a Fano threefold as above and $L$ is a line
on $X$. Consider a natural projection $\pi: \mathscr{ML}_g\to
\mathscr M_g$ whose fiber over a point $[X]\in \mathscr M_g$
(which corresponds to $X$) is isomorphic to $\tau(X)$. By Theorem
\ref{theorem-lines}(3) we have $\dim \mathscr M_g=\dim
\mathscr{ML}_g-1$. By Theorem \ref{theorem-construction-g=9}
$\mathscr{ML}_g$ is isomorphic to the moduli space of embedded
curves $\Gamma \subseteq W$ of degree $7$ and genus
$g(\Gamma)=12-g$.

Let further $\mathscr{M}_g'\subseteq \mathscr{M}_g$ be the closed
subvariety formed by all Fano threefolds $X$ whose Fano scheme
$\tau(X)$ is non-smooth, and let $\mathscr{ML}_g'\subseteq
\mathscr{ML}_g$ be the subvariety formed by all pairs $(X,L)$ such
that $L$ is of type $(\beta)$. Then
$\mathscr{M}_g'=\pi(\mathscr{ML}_g')$. Since such a Fano threefold
$X$ contains at most a finite number of $(\beta)$-lines (see
Remark \ref{gen-red}) we have $\dim \mathscr{M}_g'=\dim
\mathscr{ML}_g'$. Now the second assertion of Theorem \ref{mthm}
is immediate in view of the following claim.
\end{proof}

\begin{claim}\label{12}
Let $\mathscr H_g$ be the Hilbert scheme parameterizing the curves
$\Gamma$ on $W$ of degree $7$ and arithmetic genus
$p_a(\Gamma)=12-g$. Then $\dim \mathscr H_g= 91-7g$. If the
surface $F=F(\Gamma)$ is smooth along $\Gamma$, then $\mathscr
H_g$ is smooth at the corresponding point. Furthermore, the
subscheme of $\mathscr H_g$ parameterizing the curves $\Gamma$
with $F(\Gamma)$ non-normal, has codimension $2$.
\end{claim}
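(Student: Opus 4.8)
The plan is to study $\mathscr H_g$ through the deformation theory of the embedding $\Gamma\hookrightarrow W$. Recall that the Zariski tangent space to $\mathscr H_g$ at $[\Gamma]$ is $H^0(\mathscr N_{\Gamma/W})$, the obstructions lie in $H^1(\mathscr N_{\Gamma/W})$, and the expected dimension equals $\chi(\mathscr N_{\Gamma/W})$. Since $\det\mathscr N_{\Gamma/W}\simeq \omega_\Gamma\otimes(\omega_W|_\Gamma)^{-1}$, we get $\deg\mathscr N_{\Gamma/W}=(2p_a(\Gamma)-2)+(-K_W\cdot\Gamma)$, whence by Riemann--Roch
\[
\chi(\mathscr N_{\Gamma/W})=\deg\mathscr N_{\Gamma/W}+2(1-p_a(\Gamma))=-K_W\cdot\Gamma=(13-g)\cdot 7=91-7g,
\]
using that the Fano index of $W$ is $13-g$ (namely $4$ for $g=9$ and $3$ for $g=10$) and $\deg\Gamma=7$. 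Thus Claims (1) and (2) both reduce to the vanishing $H^1(\mathscr N_{\Gamma/W})=0$ whenever $F=F(\Gamma)$ is smooth along $\Gamma$: this gives smoothness of $\mathscr H_g$ at $[\Gamma]$ of the asserted dimension, and since a general $\Gamma$ has $F$ smooth (the non-normal and the isolated-singularity loci being proper, as Claim (3) will confirm), it pins down $\dim\mathscr H_g=91-7g$.

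To prove this vanishing I would use the inclusions $\Gamma\subseteq F\subseteq W$. When $F$ is smooth along $\Gamma$ we have the exact sequence of normal sheaves
\[
0\longrightarrow \mathscr N_{\Gamma/F}\longrightarrow \mathscr N_{\Gamma/W}\longrightarrow \mathscr N_{F/W}|_\Gamma\longrightarrow 0,
\]
in which $\mathscr N_{F/W}|_\Gamma$ and $\mathscr N_{\Gamma/F}$ are line bundles on $\Gamma$. Since $F$ is a divisor in $|\mathscr O_W(12-g)|$ we have $\mathscr N_{F/W}|_\Gamma\simeq\mathscr O_\Gamma(12-g)$, of degree $7(12-g)$ (namely $21$ or $14$); and since $\omega_F\simeq\mathscr O_F(-1)$ (Proposition \ref{classification-F}), adjunction on $F$ gives $\mathscr N_{\Gamma/F}\simeq\omega_\Gamma\otimes\mathscr O_\Gamma(1)$, of degree $2p_a(\Gamma)-2+7$ (namely $11$ or $9$). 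In both cases these degrees exceed $2p_a(\Gamma)-2$ (equal to $4$ for $g=9$ and $2$ for $g=10$), so both line bundles have vanishing $H^1$ by Riemann--Roch, whence $H^1(\mathscr N_{\Gamma/W})=0$ as required.

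For Claim (3) I would parametrize the locus $\mathscr H_g^{\mathrm{nn}}\subseteq\mathscr H_g$ of curves with $F(\Gamma)$ non-normal using Proposition \ref{classification-F} and Corollary \ref{newcor}. For such $\Gamma$ the surface $F$ is a scroll whose singular locus $\Lambda=\Sing(F)$ is a line and a $(13-g)$-secant of $\Gamma$, the normalization $F'$ is a fixed Hirzebruch surface, and the proper transform $\Gamma'$ of $\Gamma$ lies in the fixed class given in Corollary \ref{newcor}. Since $\Gamma$ determines $F$ (Lemma \ref{lemma-F}), hence $\Lambda$ and $\Gamma'$, the assignment $\Gamma\mapsto(\Lambda,F,\Gamma')$ is generically injective, and I would compute
\[
\dim\mathscr H_g^{\mathrm{nn}}=\dim\{\Lambda\subseteq W\}+\dim\{F\ \text{singular along a fixed}\ \Lambda\}+\dim\{\Gamma'\ \text{on}\ F'\}.
\]
For $g=9$ the three terms are $4$ (lines in $\PP^3$), $9$ (cubics in $\mathscr I_\Lambda^2$), and $\dim|3\Sigma+4\ell|=13$ on $\mathbb F_1$, giving $26=28-2$. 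For $g=10$ they are $3$ (lines on $Q$), $5$, and $\dim|2\Sigma+3\ell|=11$ on $\mathbb F_0$, giving $19=21-2$. In both cases $\mathscr H_g^{\mathrm{nn}}$ has codimension $2$.

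The hard part will be the middle term for $g=10$, where $F$ is constrained to lie on the fixed quadric $W=Q$, so one must count the quadrics $Q'$ cutting out $F=Q\cap Q'$ that are tangent to $Q$ along $\Lambda$. A local computation in suitable coordinates (with $\Lambda=\{x_2=x_3=x_4=0\}$ and $Q=\{x_0x_2+x_1x_3+x_4^2=0\}$) shows that these $Q'$ form a $7$-dimensional linear space containing $Q$, so the surfaces $F$ form a $5$-dimensional family; this is the computation I expect to be most delicate. Finally I would verify that the remaining sub-cases of Proposition \ref{classification-F} --- the degenerate normalization $F'\simeq\mathbb F_2$ for $g=10$, and cases (i) versus (ii) for $g=9$ --- yield families of dimension no larger than the generic count above, so they do not affect the codimension. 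This completes Claim (3), and combined with $\dim\mathscr H_g=91-7g$ it yields the desired codimension-one statement in moduli.
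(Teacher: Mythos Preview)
Your proof is correct and follows the paper's approach: the same normal-bundle exact sequence and Riemann--Roch computation for parts (1)--(2), and the same parameter count for part (3). Your treatment of (3) is more explicit than the paper's terse one-line count---you decompose the family of non-normal $F$ into a choice of the singular line $\Lambda$ followed by a choice of $F$ singular along $\Lambda$, and work out the local computation for $g=10$ in coordinates---but the strategy and the resulting dimensions ($26=28-2$ for $g=9$, $19=21-2$ for $g=10$) coincide with the paper's.
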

\begin{proof}
Assuming that $F(\Gamma)$ is smooth  along $\Gamma$, we consider
an exact sequence of normal bundles with base
$\Gamma$\be\label{es} 0\longrightarrow\mathscr
N_{\Gamma/F}\longrightarrow\mathscr N_{\Gamma/W}
\longrightarrow\mathscr N_{F/W}|_\Gamma\longrightarrow 0\,. \ee
Taking into account the relations
\begin{equation*}
\deg \mathscr N_{\Gamma/F}=2g(\Gamma)-2+\deg \Gamma
\quad\text{and}\quad \deg \mathscr N_{F/W}|_\Gamma=\Gamma\cdot F\,,
\end{equation*}
we obtain by (\ref{es}) that $H^1(\mathscr N_{\Gamma/W})=0$ and
$\dim H^0(\mathscr N_{\Gamma/W})=91-7g$.
Now the first two assertions follow by the standard facts
of the deformation theory.

The proof of the last assertion is just a parameter count. By
Corollary \ref{newcor} the dimension of the family of curves
$\Gamma$ with a non-normal surface $F(\Gamma)$ equals $13$ and
$11$ in cases (a) and (b)-(b${}'$), respectively, while the family
of all non-normal surfaces  $F$ is of codimension $15-g$.
\end{proof}

\textbf{Second construction.} In this and the next subsections we
describe some families of Fano threefolds of genera $9$ and $10$
carrying a cylinder, which plausibly are not covered by Theorem
\ref{mthm}. In this subsection we prove the following theorem.

\bthm\label{theorem-degree4-DuVal}
In the  notation as in Setup \textup{\ref{setup}} and Theorem
\ref{theorem-construction-g=9}, in the case $g=10$
the threefold $X$ contains a cylinder whenever the surface $F$ has a
singularity worse than the Du Val singularity of type $A_1$.
\ethm

\bproof Assume that the surface $F\subseteq W\subseteq \PP^4$ is
singular, where $W$ is as before a smooth quadric in $\PP^4$ and
$F$ is a complete intersection quartic surface in $W$. Let $P\in
F$ be a singular point. There is a commutative diagram
 \be\label{codi}
\xymatrix{ &\bar W\ar[dr]^{q}\ar[dl]_{p}&
\\
W\ar@{-->}[rr]^{\xi}&&\PP^{3} } \ee where $\xi$ is the projection
from $P$ and  $p$ is the blowup of $P$.
Let $\bar E\subseteq \bar W$ be the exceptional divisor and $\bar
F\subseteq \bar W$ the proper transform of $F$. Then $\Pi=q(\bar
E)$ is a plane in $\PP^3$, while the birational morphism $q$ is
the blowup of a conic $C\subseteq \Pi$. Furthermore, let
$H_P=W\cap T_{P,W}$ be the tangent hyperplane section and $\bar
H_P\subseteq \bar W$ be its proper transform. Then   $\bar H_P$ is
the $q$-exceptional divisor. Now let $\bar F\subseteq \bar W$ be
the proper transform of $F$ and let $F^{\circ}=q(\bar F)$. It is
easily seen that $F^{\circ}\subseteq \mathbb P^3$ is a quadric.
Obviously, $W\setminus (F\cup H_P)\simeq \PP^3\setminus
(F^{\circ}\cup \Pi)$. Note that $\bar F\cap \bar E$ is the
exceptional divisor of $p_{\bar F}: \bar F\to F$ and $\bar F\cap
\bar E\simeq F^{\circ}\cap \Pi$.

Now assume that the singularity $P\in F$ is worse than a Du Val
singularity of type $A_1$. Then $\bar F\cap \bar E\simeq
F^{\circ}\cap \Pi$ cannot be a smooth conic. So it is either a
pair of crossing lines or a double line. In any case
$\PP^3\setminus (F^{\circ}\cup \Pi)$ admits a cylinder by the
arguments in the proof of Theorem \ref{mthm1} for $g=9$. Indeed,
$F^{\circ}\cup \Pi$ can be regarded as a cubic surface singular
along a line. \eproof

Consider, for instance, the following construction.

\begin{exa}
Let $\Gamma_{0}\subseteq \PP^2$ be a plane quartic curve with a
node $P_1$. Pick a pair of distinct general points $P_2$, $P_3\in
\Gamma_0$. Let $F_1\to \PP^2$ be the blowup of $P_1, P_2 , P_3$
and let $E_i$ be the corresponding exceptional divisors. Let
$\Gamma_1\subseteq F_1$ be the proper transform of $\Gamma_0$, and
let $P_4=\Gamma_1\cap E_2$ (this is a single point). Let $F_2\to
F_1$ be the blowup of $P_4$, $E_4$ be the exceptional divisor, and
$\Gamma_2\subseteq F_2$ be the proper transform of $\Gamma_1$.
Take a general point $P_5\in E_4$. Letting $F_3\to F_2$ be the
blowup of $P_5$ and $\Gamma_3\subseteq F_3$ be the proper
transform of $\Gamma_2$, we see that $F_3$ is a weak del Pezzo
surface of degree $4$ \cite[ch. 8]{Dol} containing two
$(-2)$-curves $C_2$ and $C_4$ that meet at a point. These are the
proper transforms of $E_2$ and $E_4$, respectively. The
anticanonical image of $F_3$ is a del Pezzo surface $F\subseteq
\PP^4$ with a Du Val singularity of type $A_2$, which is the image
of $C_2\cup C_4$. Since $\Gamma_3 \cdot (C_2+C_4)=1$, the image
$\Gamma$ of $\Gamma_3$ is a smooth curve of genus $2$ and degree
$7$. Thus $(F,\Gamma)$ satisfies the conditions of Theorem
\ref{theorem-degree4-DuVal}. More precisely, the complement $W
\backslash F$ contains a cylinder, and the center $\Gamma\subseteq
F$ of the blow-up $\sigma :\tilde{X} \to W$ is such that one can
reach a pair $(X,D)$ consisting of a Fano threefold $X=X_{18}$
$(g=10)$ and an irreducible divisor $D$ on $X$, which is the
proper transform of $\sigma^{-1}(\Gamma)$ on $X$, with
$X\backslash D \simeq W \backslash F$.
\end{exa}

\begin{rem}
The construction (\ref{codi}) works as well in the case of a
non-normal $F$. We believe that in this case there are several
cylinder structures on $X$, and hence the Makar-Limanov invariant
of any affine cone over $X$ vanishes.
\end{rem}

\textbf{Third construction.} In this subsection we construct
a cylinder in the complement
of an irreducible cubic surface $F\subseteq  \PP^3$
under certain restrictions on the singularities of
$F$.
In \cite{Oh} some families of cubic surfaces $F$ in $\PP^3$
were found such that the complement
$\PP^3\setminus F$ contains an $\A^2$-cylinder.
However, sometimes an $\A^1$-cylinder exists
while an $\A^2$-cylinder does not.

\bthm\label{theorem-degree3-DuVal} In the  notation as in
\textup{\ref{setup}}--\ref{theorem-construction-g=9}, in the case
$g=9$ the threefold $X$ contains a cylinder whenever the cubic
surface $F\subseteq\PP^3$ has a singular point of type $A_3$ or
worse. There exists a family of Fano threefolds $X$ satisfying
these assumptions. \ethm

This theorem follows from the next proposition and Example
\ref{c-gamma} below.

\bprop\label{cyl-cubic} Let $F$ be an irreducible
cubic surface in $\PP^3$. Then the complement
$\PP^3\setminus F$ contains a cylinder whenever the surface $F$ has a
singularity worse than the Du Val $A_2$ singularity.
\eprop

Before dwelling in the proof, let us mention an application of
this result.

\brem\label{ga-action} We observe that, whenever the complement of
a cubic surface in $\PP^3$ contains a cylinder, this complement
admits an effective $\G_a$-action. This applies e.g. to the
singular cubic surfaces as in Proposition \ref{cyl-cubic} or in
Lemma \ref{linear-cylinder} below.

More generally, let $X$ be a normal affine variety such that the
class group $\Cl(X)$ is a torsion group, and let $U\simeq
\A^1\times Z$ be an $\A^1$-cylinder in $X$. We claim that $X$
admits an effective $\G_a$-action along the corresponding
$\A^1$-fibration. Indeed, consider the $\G_a$-action on $U$ by
shifts on the second factor, and let $\partial\in\Der (\mathcal
O(U))$ be the corresponding locally nilpotent derivation. By our
assumption, a multiple of the effective reduced divisor
$D=X\setminus U$ is principal i.e., $mD=\divis(f)$ for some $f\in
\mathcal O(X)$ and $m\in\N$. Clearly, $f\in\ker (\partial)$ since
$f$ does not vanish on the $\A^1$-rulings of $U$. Hence
$f^N\delta$ is well defined and locally nilpotent on $\mathcal
O(X)$ for $N$ sufficiently large (cf.\ \cite[Proposition
3.5]{KPZ}). \qed\erem

We start the proof of Proposition \ref{cyl-cubic}
with several remarks and lemmas.

\brems\label{sing-cub} (1) Any  non-normal, irreducible cubic
surface $F$ in $\PP^3$ different from a cone is a scroll in lines
with a double line \cite{Na}, \cite{Re3} (cf.\ Proposition
\ref{classification-F}). The proof of Proposition \ref{cyl-cubic}
goes for such a surface $F$ in the same way as that of Theorem
\ref{mthm1}.

(2) If $F$ has a singular point $P$ of multiplicity $\ge 3$,
then $F$ is a cone over a plane cubic curve. So the projection
$\PP^3\setminus\{P\}\to \PP^2$ with center $P$
determines a (linear) cylinder
structure over an appropriate open set $Z\subseteq\PP^2$.

(3) In case (1) $F$ does not admit any isolated singularity. In
fact, if $F$ has a Du Val singularity then all its singular points
are at most isolated Du Val singularities. The classification of
all possible sets of Du Val singularities on cubic surfaces in
$\PP^3$ is as follows (see e.g., \cite{BW} or
\cite{Dol})\footnote{ The coefficients in the list mean the number
of singular points of a given type.}:
$$(nA_1), \,\,n=1,\ldots,4,\,\,(nA_2), \,\,n=1,2,3,\,\,
(A_3),\,\,(A_4),\,\,(A_5),\,$$
$$(nA_1,A_2),\,\,(nA_1,A_3),\,\,n=1,2,\,\,(A_1,2A_2),
\,\,(A_1,A_4),\,\,(A_1,A_5),\,\,$$
$$(D_4),\,\,(D_5),\,\,(E_6)\,.$$
\erems

In the proof of Proposition \ref{cyl-cubic} we use the following
simple observation.

\blem\label{linear-cylinder} Let $F$ be a cubic surface in
$\PP^3$, $L$ a line on $F$, and $\Pi_{\lambda}$
($\lambda\in\PP^1$) the pencil of planes through $L$. Suppose that
for a general $\lambda\in\PP^1$ \be\label{star}\Pi_{\lambda}\cap
F=L+C_{\lambda}, \quad\text{where}\quad C_{\lambda}\cap L=2P,\,\ee
i.e. $C_{\lambda}$ is a plane conic tangent to $L$ at a point $P$.
Then $\PP^3\setminus F$ contains a cylinder. \elem

\bproof Blowing up $\PP^3$ with center $L$
yields a diagram
\[
\xymatrix{ &\tilde {\mathbb P}^3\ar[dl]_{p}\ar[dr]^{q}&
\\
\mathbb P^3\ar@{-->}[rr]^{\xi}&&\mathbb P^1 }
\]
where $p$ is the blowup of $L$, $\xi$ is the projection with
center $L$, and $q$ is a $\PP^2$-bundle. Let $\tilde F$ be the
proper transform  of $F$ on $\tilde{\PP}^3$ and $\tilde E\subseteq
\tilde{\PP}^3$ be the exceptional divisor of $p$. We fix a member,
say, $\Pi_\infty$ of our pencil, and we let $\tilde\Pi_\infty$
denote its proper transform on $\tilde{\PP}^3$. In $\tilde{\PP}^3$
we consider the open set
$$\tilde U=\tilde{\PP}^3\setminus
(\tilde\Pi_\infty\cup\tilde
E)\simeq\PP^3\setminus\Pi_\infty\simeq\A^3\,.$$ Let $h$ be a
regular function on $\tilde U$ which defines the affine surface
$\tilde F\cap \tilde U$. Consider further a rational map
$$\delta=(q,h):\tilde{\PP}^3\dashrightarrow
\PP^1\times\PP^1\,.$$ Its restriction to the open set $\tilde
U\setminus\tilde F$ is regular, while the restriction to a general
fiber $\tilde\Pi_\lambda\setminus (\tilde E\cup \tilde F)$ of
$q|\tilde U$ defines an $\A^1$-fibration. Hence $\delta$ defines
as well an $\A^1$-fibration over a Zariski open subset of
$\PP^1\times\PP^1$. By\cite[Theorem 2]{KM} and \cite[Theorem]{KW}
there exists a cylinder in $\PP^3\setminus F$ compatible with this
$\A^1$-fibration.

\eproof

\brems\label{lines-cubics} (1) The construction in the proof
yields a cylinder in conics with a unique base point $P$. Such a
cylinder can exist only if $P\in F$ is a singular point.

(2) If $F$ as in Lemma \ref{linear-cylinder}
is singular at $P$, then
there is a line $L$ on $F$ through $P$.
Indeed, in an affine chart
centered at $P$ the equation of $F$ can be
written as $f_2+f_3=0$,
where $f_2$ and $f_3$ are homogeneous forms
of degree $2$ and $3$, respectively.
The system of equations $f_2=f_3=0$
defines $6$ lines on $F$ through $P$,
counting with multiplicities.

(3) Suppose that for a triple $(F,L,P)$ as before, the pencil
$\Pi_{\lambda}$ does not satisfy the assumptions of Lemma
\ref{linear-cylinder}. Then in an appropriate affine chart with
coordinates $(x,y,z)$ centered at a singular point $P$ of $F$, the
surface $F$ can be given by equation
$$xy+zg(x,y,z)=0\,.$$ Since the quadratic part is of rank $\ge 2$,
in this case $(F,P)$
is an $A_n$-singularity. These observations lead to the following corollary.
\erems

\bcor\label{notAn} If $(F,P)$ as before
is a Du Val singularity not of type $A_n$,
then $\PP^3\setminus F$ contains an $\A^1$-cylinder in conics with
a unique base point $P$.
\ecor

It remains to determine for which $A_n$-singularities $(F,P)$
of cubic surfaces the complement $\PP^3\setminus F$
contains a cylinder.

\blem\label{summer} Let $F$ be a cubic surface in $\PP^3$ with an
$A_n$-singularity $P\in F$. If $n\ge 3$ then the complement
$\PP^3\setminus F$ contains a cylinder.\elem

\bproof Suppose that $n\ge 3$, and let $f=f_2+f_3=0$ be an
equation of $F$ in a local  affine chart $(x,y,z)$ centered at
$P$. If $\rk f_2=1$ then $(F,P)$ is of type $D_n$ or $E_6$. If
$\rk f_2=2$ then $(F,P)$ is non-normal or of type $A_n$ ($n\ge
2$), and if $\rk f_2=3$ then $(F,P)$ is of type $A_1$. In the
former case, the result follows from Corollary \ref{notAn}. The
case $n\le 2$ is eliminated by our assumption. In the second case,
we can reduce the equation to the form
$$f=xy+g_3(x,y)+g_2(x,y)z+g_1(x,y)z^2+cz^3=0\,,$$
where $g_i$ is a homogeneous form of degree $i$. We claim that
$c=0$. Indeed, let the blowup of $\PP^3$ at $P$ be given in an
affine chart as $(x,y,z)\longmapsto (xz,yz,z)$, with the
exceptional divisor $z=0$. Then the equation of the proper
transform $F'$ of $F$ in this chart is
$$xy+g_3(x,y)z+g_2(x,y)z+g_1(x,y)z+cz=0\,.$$ Since $n>2$
and the surface $F'$ should acquire a singular point
of type $A_{n-2}$ at the origin, we conclude that $c=0$.

Furthermore, we may assume that $L=\{x=y=0\}$.
Consider the pencil of planes $\Pi_\lambda=\{y=\lambda x\}$
through $L$. We have $\Pi_\lambda\cap F=L+C_\lambda$,
where $L\cap C_\lambda=\{x=0, \,z^2=0\}$ has a double point.
Now the conclusion follows by Lemma \ref{linear-cylinder}. \eproof

\brem\label{first-rem} In the case where $P\in F$
is an $A_1$ or $A_2$ singularity and $L$ is a line on $F$
through $P$, there is no plane $\Pi_\lambda$ through $L$
such that the residual conic on the section $\Pi_\lambda\cap F$
is tangent to $L$ at $P$. \erem

\brem\label{almost-last-rem}
For a cubic surface $F\subseteq \PP^3$
the following are equivalent:
\begin{enumerate}
 \item
$F$ has a singularity worse than Du Val $A_2$ singularity,
 \item
there exists a line $L $ on $F$ such that the pair $(F,L)$ is not
purely log terminal (PLT).
\end{enumerate}
Indeed, assuming that all  singularities of $F$ are of type $A_1$
or $A_2$, consider a line $L$ on $F$. Since $L$ is smooth, for any
singular point $P\in F$ the dual graph of the minimal resolution
has the form
\[
\circ\text{---}\overset{L}{\bullet} \qquad
\text{or}
\qquad
\circ\text{---}\circ\text{---}\overset{L}\bullet
\]
Thus $(F,L)$ is PLT by the classification
of the  PLT singularities of surfaces.
Hence (2) implies (1).

To show the converse, assume that $(F,L)$ is PLT. Again by the
classification of the  PLT singularities, and because there is a
line passing through any singular point of $F$, the surface $F$ is
normal and has only $A_n$-singularities. Take $L$ as in Lemma
\ref{linear-cylinder}, and let $P\in L$ be a singular point of
$F$. For a general plane $\Pi$ passing through $L$ we have
$\Pi\cap F=L+C$, where $C$ is a smooth conic tangent to $L$ at
$P$. Then the pair $(F,C+L)$ is not log canonical (LC) at $P$.

On the other hand, we claim that the dual graph of the minimal
resolution of $(F,C+L)$ has the form
\[
\overset{L}\bullet \text{---}\overset{E_1}\circ\text{---}\cdots
\text{---}\overset{E_n}\circ\text{---}\overset{C}\bullet
\]
Consequently, the pair $(F,C+L)$ is LC at $P$, a contradiction.

To show the claim we consider the minimal resolution $\mu: \tilde
F\to F$  of the $A_n$-singularity $(F,P)$ and its fundamental
cycle $Z=\sum_{i=1}^n E_i$. Since  $L$ and $C$ both are smooth and
pass through $P$ we have $L\cdot Z=1=C\cdot Z$. Hence  $C$ and $L$
are both attached at the end vertices of the dual resolution chain
$\sum_{i=1}^n E_i$. It remains to show that they are attached at
the opposite end vertices. Write $\mu^*(C+L)=C'+L'+\sum_{i=1}^n
\alpha_i E_i$, where $\alpha_i>0$, $i=1,\ldots,n$, are the
vanishing orders on the $E_i$ of the pullback to $\tilde F$ of the
local equation of the Cartier divisor $C+L$ on $F$. Taking
intersections with $E_i$,  $i=1,\ldots,n$,  yields a system
\[-2\alpha_1+\alpha_2=-\delta_1,\,
\alpha_1-2\alpha_2+\alpha_3=-\delta_2,\,
\alpha_2-2\alpha_3+\alpha_4=-\delta_3,\, \ldots,\,
\alpha_{n-1}-2\alpha_n=-\delta_n\,,\] where $\delta_i=(C'+L')\cdot
E_i\in\{0,1,2\}$. We have $\sum_{i=1}^n \delta_i=2$. Assuming that
$\delta_1>0$ and summing up the equations we obtain
$-(\alpha_1+\alpha_n)=-2$, hence $\alpha_1=\alpha_n=1$. Plugging
in this in our system we find $\alpha_2+\delta_1=2$, where
$\alpha_2>0$ and $\delta_1>0$, hence $\alpha_2=1=\delta_1$. From
the second equation we deduce
$$\alpha_3=2\alpha_2-\alpha_1-\delta_2=1-\delta_2>0\,,$$
hence $\delta_2=0$ and $\alpha_3=1$, and so on. By recursion,
finally we get
$$\delta_1=1,\,\delta_2=\ldots=\delta_{n-1}=0,\quad\text{and}\quad
\delta_n=1\,.$$
Now the claim follows.  \erem

The  next example fixes the second part of
Theorem \ref{theorem-degree3-DuVal}.

\bexa\label{c-gamma} Let us construct a pair $(F,\Gamma)$, where
$\Gamma$ is a smooth curve of degree $7$
 and genus $3$ in the smooth locus of a cubic surface $F$ in $\PP^3$ with
a unique singular point $\Sing (F)=\{P\}$, such that $(F,P)$  is
an $A_3$-singularity.

 Consider a smooth quartic curve $\bar\Gamma$ in $\PP^2$.
 Blowing up a point $P_0$ on $\bar\Gamma$ and three infinitesimally
 near points $P_1,P_2,P_3$ on the subsequent proper transforms of
 $\bar\Gamma$,
 and then also a point $P_4\neq P_0$ on $\bar\Gamma$
 and an extra point $P_5\in\PP^2\setminus \bar\Gamma$,
 we obtain a smooth surface $\tilde F$,
 a chain of rational curves $\mathcal L=E_0+E_1+E_2$  on $F$
 with $E_i^2=-2$,
 $i=0,1,2$, which consists of the first three  components appeared
 in the exceptional locus,
 and a smooth curve $\tilde \Gamma$  on $\tilde F$ of genus $3$
 and anticanonical degree $7$, disjoint with $\mathcal L$.
 Blowing down $\mathcal L$ leads to a singular cubic surface $F$
 with a unique
 singular point of type $A_3$ anticanonically embedded in $\PP^3$.
 The image $\Gamma$ of $\tilde\Gamma$ on $F$ is a desired curve.
 \eexa

The following observation shows however that not any cubic surface
with a deep singularity is available for our purposes.

\brem\label{restr-sing} By construction, the criterion of Theorem
\ref{theorem-degree3-DuVal} on the existence of a cylinder in
$X=X_{16}\subseteq\PP^{10}$ ($g=9$) is valid as long as the cubic
surface $F$ in $\PP^3$ as in
\ref{setup}--\ref{theorem-construction-g=9} contains a smooth
curve $\Gamma$ of genus $3$ and degree $7$. However, there is no
such curve $\Gamma$ on a cubic surface $F$ with an isolated conic
singularity or a Du Val $E_6$ singularity (although by Proposition
\ref{cyl-cubic} in this case $\PP^3\setminus F$ contains an
$\A^1$-cylinder). In other words, a normal cubic surface with a
conic or an $E_6$ singularity cannot appear via the Sarkisov link
as in \ref{theorem-construction-g=9}. In the conic case, this
follows from Proposition \ref{classification-F}.

Suppose further that $F \subseteq\PP^{3}$ is a cubic surface with
a Du Val $E_6$ singularity $P$. Let $L$ be a line on $F$ passing
through $P$. Then $\Cl(F)\simeq\Z[L]$, where $L^2=1/3$. Since
$\deg (\Gamma)=7$ we have $\Gamma\sim 7L$ and so $\Gamma^2=49/3$.
It follows that $P\in\Gamma$.

Consider the minimal resolution $\sigma:\tilde F\to F$,
and let $$Z=E_1+2E_2+3E_3+2E_4+E_5+2E_6\,$$ be the fundamental cycle
supported on the exceptional divisor $E=\sum_{i=1}^6 E_i$ of
$\sigma$ with the dual graph
\newcommand{\linee}{\text{---}}

\[
\begin{array}{c@{\quad }c@\,c@\,c@\,c@\,c@\,c@\,c@\,c@\,c}
&\overset{\Gamma'}\bullet
\text{---}\overset{E_1}{\circ}&\linee&\overset{E_2}\circ
&\linee&\overset{E_3}\circ&\linee&\overset{E_4}{\circ}
&\linee&\overset{E_5}\circ\text{---}\overset{L'}\bullet
\\[-4pt]
&&&&&|
\\[-4pt]
&&&&&\underset{E_6}{\circ}
\end{array}
\]
Since $L$ and $\Gamma$ are both smooth and pass through $P$, we
have $Z\cdot \Gamma'=1=Z\cdot L'$, where $\Gamma'$ and $L'$ are
the proper transforms of $\Gamma$ and $L$ on $\tilde F$,
respectively. In the minimal resolution graph, $\Gamma'$ and $L'$
must be both attached at the end vertices $E_1$ or $E_5$. We claim
that they are not attached to the same vertex.

Suppose to the contrary that $\Gamma'\cdot E_1=1= L'\cdot E_1$. We
use the notion of a \textit{different} (see e.g.\ \cite{Sh2},
\cite{Pr4})
$$\Diff_C(0)=(K_F+C)|_C-K_C\,,$$ where $C$ is a curve on $F$
smooth at $P$. By adjunction,
$$(K_F+L)\cdot L=-2+\deg \Diff_L(0)\quad\text{and}\quad
(K_F+\Gamma)\cdot \Gamma=4+\deg \Diff_\Gamma(0)\,,$$ where
$\Diff_L(0)=\Diff_\Gamma(0)$ because of the local analytic
invariance of the different \cite{Pr4}. We have $(K_F+L)\cdot
L=-2/3$ and so $\deg \Diff_L(0)=4/3$. On the other hand,
$$(K_F+\Gamma)\cdot \Gamma=(-3L+7L)\cdot 7L=28L^2=28/3\,.$$
We deduce that $\deg \Diff_\Gamma(0)=16/3\neq \deg \Diff_L(0)$, a
contradiction. Thus we may assume that $\Gamma' \cdot E_1=1=L'
\cdot E_5$. Then by the symmetry of the resolution graph, it
follows that $\deg \Diff_L(0)=\deg \Diff_\Gamma (0)$, which is
again absurd by the same computation as above. \erem


\begin{thebibliography}{KMMH}

\bibitem[Be]{Be} A.\ Beauville, {\em Vari\'et\'es de Prym et jacobiennes
interm\'ediaires}, Ann.\ Sci.\ \'Ecole Norm.\ Sup.\ (4) 10 (1977),
309--391.

\bibitem[BL]{BL} J.\ Blanc, S.\ Lamy,
{\em Weak Fano threefolds obtained by
blowing-up a space curve and construction
of Sarkisov links}, manuscript, 2011
(a letter to the second author).

\bibitem[BW]{BW} J.W.\ Bruce, C.T.C.\ Wall,
{\em On the classification of cubic surfaces},
J.\ London Math.\ Soc.\ (2) 19 (1979), 245--256.

\bibitem[CG]{CG}
C.~H.\ Clemens, P.~A.  Griffiths, {\em
The intermediate Jacobian of the cubic threefold},
Ann.\ of Math.\ 95(2) (1972), 281--356.


\bibitem[Cu]{Cu}
S.~D.\ Cutkosky, {\em On {F}ano {$3$}-folds}, Manuscripta Math.\
64(2) (1989), 189--204.

\bibitem[Dol]{Dol} I.V.\ Dolgachev,
{\em Topics in classical algebraic geometry, Part I}, available
at: http://www.math.lsa.umich.edu/~idolga/topics1.pdf


\bibitem[Fur]{Fur} M.\ Furushima, {\em The complete
classification of compactifications of $\C^3$ which are projective
manifolds with the second Betti number one}, Math.\ Ann.\ 297
(1993), 627--662.

\bibitem[GH]{GH} Ph.\ Griffiths, J.\ Harris,
{\em Principles of algebraic geometry}. John Wiley and Sons, Inc.,
New York, 1994.

\bibitem[Gr]{Gr} M.~M.\ Grinenko, {\em Mori structures on
a Fano threefold of index
2 and degree 1}, (Russian) Tr.\ Mat.\ Inst.\ Steklova 246 (2004),
Algebr.\ Geom.\ Metody, Svyazi i Prilozh., 116--141; translation
in Proc.\ Steklov Inst.\ Math.\ 246 (2004), 103--128.

\bibitem[GLN]{GLN}
L.~Gruson, F.~Laytimi, and D.~S.\ Nagaraj, {\em On prime Fano
threefolds of genus 9}, Internat.\ J.\ Math.\ 17 (2006), 253--261.

\bibitem[Ha]{Ha} R.\ Hartshorne, {\em Algebraic Geometry}.
Springer-Verlag, New York-Heidelberg, 1977.

\bibitem[HW]{HW}
F.\ Hidaka, Keiichi Watanabe, {\em Normal Gorenstein surfaces
with ample anti-canonical divisor},
Tokyo J.\ Math.\ 4(2) (1981), 319--330.

\bibitem[Il]{Il}
A.\ Iliev, {\em The {${\rm Sp}_3$}-Grassmannian and duality for
prime Fano threefolds of genus 9}, Manuscripta Math.\ 112 (2003),
29--53.

\bibitem[Is$_1$]{Is3}
V.A.\ Iskovskikh, {\em Double projection from a line onto Fano
$3$-folds of the first kind},
Math.\ USSR-Sb.\ 66 (1990), 265--284.

\bibitem[Is$_2$]{Is4}
V.~A. Iskovskikh, {\em Anticanonical models of three-dimensional
algebraic varieties}, J.\ Sov.\ Math.\ 13 (1980), 745--814.

\bibitem[Is$_3$]{Is5}
V.~A.\ Iskovskikh, {\em Birational automorphisms
of three-dimensional algebraic varieties},
J.\ Sov.\ Math.\ 13 (1980), 815--868.

\bibitem[IM]{IM} V.A.\ Iskovskikh, Yu.I.\ Manin,
{\em Three-dimensional quartics and counterexamples to the
L\"uroth problem}, Math.\ USSR Sb.\ 15 (1971), 141--166 (1972).

\bibitem[IPr]{IPr} V.A.\ Iskovskikh, Yu.G.\ Prokhorov,
{\em Fano varieties}, Algebraic geometry, V, 1--247, Encyclopaedia
Math.\ Sci.\ 47, Springer, Berlin, 1999.

\bibitem[IPu]{IPu} V.~A.\ Iskovskikh, A.~V.\ Pukhlikov, {\em
 Birational automorphisms of multidimensional algebraic manifolds},
 J.\ Math.\ Sci.\ 82 (1996), 3528--3613.

\bibitem[KM]{KM} T.~Kambayashi, M.~Miyanishi,
{\em On flat fibrations by the affine line},
Illinois J.\ Math.\ 22 (1978), 662--671.

\bibitem[KW]{KW} T.~Kambayashi, D.~Wright,
{\em Flat families of affine lines are affine-line bundles},
Illinois J.\ Math.\ 29 (1985), 672--681.

\bibitem[KPZ]{KPZ} T.\ Kishimoto, Yu.\ Prokhorov, M.\ Zaidenberg,
{\em   Group actions on affine cones}. arXiv:0905.4647, 40p.

\bibitem[Kol]{Kol3}
J.\ Koll{\'a}r, {\em Flops}, Nagoya Math. J. 113 (1989), 15--36.

\bibitem[Mo]{Mo} S.\ Mori, {\em Threefolds whose canonical
bundles are not numerically effective}, Ann.\ Math.\ 115 (1982),
133--176.

\bibitem[Na]{Na}
M.\ Nagata, {\em On rational surfaces. {I}.
Irreducible curves of arithmetic genus
  {$0$}\ or {$1$}},
Mem.\ Coll.\ Sci.\ Univ.\ Kyoto Ser.\ A Math., 32 (1960), 351--370.

\bibitem[Oh]{Oh}
T.\ Ohta, {\em The structure of algebraic embeddings of ${\bf C}^2$
into ${\bf C}^3$
(the cubic hypersurface case)}, Kyushu J.\ Math.\ 53 (1999), 67--106.

\bibitem[Pr$_1$]{Pr} Yu.\ Prokhorov,
{\em Geometrical properties of Fano threefolds} (in Russian),
PhD thesis,
Moscow State Univ. 1990, 110p.

\bibitem[Pr$_2$]{Pr-90b}
Yu.\ Prokhorov, {\em Exotic Fano varieties}, Moscow Univ.\ Math.\
Bull.\ 45 (1990), 36--38.

\bibitem[Pr$_3$]{Pr-90c}
Yu.\ Prokhorov, {\em Automorphism groups of Fano $3$-folds},
Russian Math.\ Surveys 45 (1990), 222--223.

\bibitem[Pr$_4$]{Pr4} Yu.\ Prokhorov,
{\em  Lectures on complements on log surfaces},
MSJ Memoirs, 10. Mathematical Society of Japan, Tokyo, 2001.

\bibitem[Re$_1$]{Re1}
M.\ Reid, {\em Lines on {F}ano 3--folds according to {S}hokurov},
Technical Report~11, Mittag-Leffler Inst., 1980.

\bibitem[Re$_2$]{Re2}
M.\ Reid, {\em Minimal models of canonical {$3$}-folds}, In:
Algebraic varieties and analytic varieties (Tokyo, 1981), Adv.\
Stud.\ Pure Math.\ vol.\ 1, 131--180, North-Holland, Amsterdam,
1983.

\bibitem[Re$_3$]{Re3}
M.\ Reid, {\em Nonnormal del Pezzo surfaces},
Publ.\ Res.\ Inst.\ Math.\ Sci.\ 30(5) (1994), 695--727.

\bibitem[Sh$_1$]{Sh1}
V.~V. Shokurov, {\em The existence of a line on {F}ano varieties},
Izv.\ Akad.\ Nauk SSSR Ser.\ Mat.\ 43 (1979), 922--964.

\bibitem[Sh$_2$]{Sh2} V.~V. Shokurov, {\em 3-fold log flips.}
Russ.\ Acad.\ Sci.\ Izv.\ Math. 40 (1993), 95--202.

\bibitem[Tyu]{Tyu}
 A.N.\ Tyurin, {\em
The middle Jacobian of three-dimensional varieties},
J.\ Sov.\ Math.\ 13  (1980), 707--745.

\bibitem[Vo]{Vo} C.\ Voisin, {\em
Sur la jacobienne interm\'diaire du double solide d'indice deux},
Duke Math.\ J.\ 57 (1988), 629--646.

\end{thebibliography}
\end{document}